\DeclarePairedDelimiter{\ceil}{\lceil}{\rceil} 
\def\smallddots{\mathinner{\raise7pt\hbox{.}\raise4pt\hbox{.}\raise1pt\hbox{.}}} 
\def\smallsdots{\mathinner{\raise1pt\hbox{.}\raise4pt\hbox{.}\raise7pt\hbox{.}}} 
\DeclareMathOperator*{\argmin}{arg\,min}   
\DeclareMathOperator*{\dist}{Dist}
\DeclareMathOperator{\diag}{diag}
\DeclareMathOperator{\rank}{rank}
\DeclareMathOperator{\range}{range}
\newtheorem{theorem}{Theorem}[section]
\numberwithin{equation}{section}
\numberwithin{table}{section}
\newtheorem{lemma}{Lemma}[section] 
\newtheorem{corollary}{Corollary}[section]
\newtheorem{algorithm}{Algorithm}[section]
\newtheorem{example}{Example}[section]
\newtheorem{definition}{Definition}[section]
\newtheorem{remark}{Remark}[section]
\begin{document} 
  
\title{Superfast Iterative Reﬁnement of\\ Low Rank Approximation of a Matrix\\ Based on the ALS Method 
and Random Sampling\footnote{The results of this paper have been presented at minisymposia of ILAS  2023 and ICIAM 2023.}} 
\author{Victor Y. Pan} 
\author{Qi Luan$^{[1],[a]}$ and  
Victor Y. Pan$^{[1,2],[b]}$\\ 
$^{[1]}$ Ph.D. Programs in  Computer Science and Mathematics \\
The Graduate Center of the City University of New York \\
New York, NY 10036 USA \\ 
$^{[2]}$ Department of Computer Science \\
Lehman College of the City University of New York \\
Bronx, NY 10468 USA \\ 
$^{[a]}$ qi\_luan@yahoo.com \\  
$^{[b]}$ victor.pan@lehman.cuny.edu \\ 
http://comet.lehman.cuny.edu/vpan/  \\
}
 
\date{}  

\maketitle 


\begin{abstract}
A matrix algorithm runs superfast (aka  at {\em sublinear cost}) if it involves much fewer flops and memory cells than an input matrix has entries.
 Big Data are frequently   represented by matrices of immense sizes that cannot be handled 
directly but can be  approximated with low rank matrices, with which one can operate superfast.  Superfast
computation of {\em Low Rank Approximations (LRA)} of a matrix, however, can be a challenge. Any superfast  LRA algorithm fails miserably  on worst case matrices. Fortunately, they rarely appear in computational practice. For an important example,  superfast  Adaptive Cross--Approximation iterations have consistently output accurate LRA of a large and important class of matrices
during the decades of their worldwide  application.
Furthermore, they output CUR LRA, which is a special attractive form of LRA, widely applied in data analysis.
Encouraged by success of these iterations  
we present a 
superfast  randomized iterative refinement of a crude CUR LRA by 
means of combining   random sampling  with the Alternating Least Squares Method, which  reduces this refinement to recursive solution of  generalized Linear Least Squares Problems (LLSPs). 
 We prove  monotone convergence of our iterations with a high probability to a near-optimal LRA of  an input matrix   
under  some specified assumptions on it and on  a crude  approximate solution to an initial generalized LLSP.
In our numerical tests two or three iterations of our algorithm have consistently and significantly improved crude initial LRAs of
 real world inputs.
\end{abstract}

\paragraph{Keywords:} 
Sublinear cost algorithms,
 Low rank approximation,  
Iterative refinement, Alternating Least Squares Method.

\paragraph{\bf 2020 Math. Subject  Classification:}
65F55, 65Y20,   68W20, 68W40, 68Q25
\clearpage

{\bf Abbreviations and Explanations/Pointers}
\smallskip

ACA iterations, Adaptive Cross-Approximation iterations (Secs. 1.1 and \ref{srltd})

ALS method,   Alternating Least Squares method
\cite{CHY12,CZPA09,
JNS13,
KBV09,ORU22}

CUR (Sec. \ref{scur}) 

LLSP, Linear Least-squares Problem  (Eqn. (\ref{eqllsps}))

LRA, Low rank approximation (Sec. \ref{slra}) 


RRQR factorization,  Rank Revealing QR  factorization \cite{GE96}
\medskip

{\bf  Definitions and Explanations/Pointers}

$||\cdot||$, the spectral norm (Secs. 1.1 and \ref{slap})

$||\cdot||_F$ , the Frobenius norm (Secs. 1.1 and \ref{slap})

$\beta$, a positive parameter not exceeding 1 (Remark \ref{rescrtoprb})

$\delta:=\dist(A,U^{(r)}) \le 1/2$,
 $\theta:=
\frac{\Bar\sigma_{r+1}}{\sigma_{r+1}}$ (Cor. \ref{coro:iter_ref})

 $\gamma_i$, $\Tilde \gamma_i$, leverage scores 
(Definition \ref{def:leverage_score})  

$\epsilon$, $\xi$, and $e$, other bounded positive parameters in  Algs. \ref{algsmplex} and \ref{algalter}  
 and Thm. \ref{thm:contracting_distance}

 $\sigma_j(M)$, the $j$th largest singular value of matrix $M$
 (Sec. 1.1 and Def. \ref{defsvd})

 $\sigma_{F,r+1}(M):=||M_r-M||_F$, (Sec. \ref{slra})
 
$A_t$ and $A$, some matrices in $\mathbb R^{m\times r}$ (Eqn. (\ref{eqllsps}))

$B_t$ and $B$,  some matrices in $\mathbb R^{r\times n}$ (Eqn. (\ref{eqllsps})

$c<1$,  a positive parameter, not exceeding 1 such that $\frac{1}{c}$ is the rate of linear convergence
of iterative refinement  of LRA (Eqns. (\ref{eqc}) and (\ref{eqn:con_dist2}) and Thm. \ref{algsmplex})

$\widehat D$, Scaling  matrix 
(Sec. \ref{ssrcs})

$\dist(G,H)$, the Principal Angle Distance
between  matrices $G$ and $H$ (Def. \ref{defpad})

$l$,  the number of row or column samples
(Sec. \ref{ssrcs})

$M$, a matrix in $\mathbb R^{m\times n}$

 $M^+$, the Moore-Penrose pseudo inverse of $M$ (Sec. \ref{slra})
 
 $M_{\perp}$ for a matrix $M$ satisfies 
 $\range(M_{\perp})=
\range(M)_{\perp}$ (Sec. \ref{slap}) 

$M_r$, the $r$-truncation of a matrix $M$, being an optimal rank-$r$ approximation of $M$ under both spectral and Frobenius matrix norms (Sec. 1.1 and Def. \ref{deftpsvd})

$M^T$, the transpose of a matrix $M$

$p_i$ and $\tilde p_i$, Sampling Probabilities
(Remark \ref{rescrtoprb}) 
 
$r$, target rank for LRA  
 
 $\range(M)$,  the range of a matrix $M$ (Sec. \ref{slap})

$\mathbb R^{g\times h}$,  the space of $g\times h$ real matrices
 
$r$-top SVD of $M$ is the compact SVD  $M_{r}=U^{(r)}\Sigma^{(r)} V^{(r)T}$ (Sec. 1.1 and Def. \ref{deftpsvd})

$S$, Sampling and Scaling  matrix 
(Sec. \ref{ssrcs})

$\widehat S$,  Sampling matrix 
(Sec. \ref{ssrcs})

$\mathbb S_{\perp}$ for a  subspace $\mathbb S$, its orthogonal complement
(Sec. \ref{slap})

SVD (Singular Value Decomposition) of $M$, the matrix equation $M=U_M\Sigma_MV^T_M$
(Def. \ref{defsvd})

 \section{Introduction}
 

\subsection{Superfast  LRA: motivation and the known algorithms}
{\em Low Rank Approximation (LRA)} of a matrix is among the most fundamental subjects of Numerical Linear Algebra and Data Mining and Analysis, 
with applications ranging from PDEs to machine learning theory, neural networks, term document data, and DNA SNP data (see \cite{MD09,M11,HMT11,MT20}, 
and the bibliography therein). 
Quite typically, $m\times n$ matrices $M$ involved in Big Data applications (e.g., unfolding matrices of multidimensional tensors) are so immense that one can only handle them by applying superfast  (aka  sublinear cost) algorithms,\footnote{Superfast algorithms have been studied extensively for Toeplitz, Hankel, Vandermonde, Cauchy, and other structured matrices having small displacement rank \cite{P15}, and we extend this concept to other matrices.} which involve 
much fewer than  $mn$ memory cells and 
flops,\footnote{``Flop" stands for
``floating point arithmetic operation".} 
e.g., operate with LRA of a matrix $M$ whenever such an LRA is available.

Quite typically,  computations with Big Data are represented with matrices that admit LRA \cite{UT19}, but any algorithm  for computing LRA fails miserably on a worst case input matrices unless it involves all  their entries (see
 Example  \ref{exdlt}).
Superfast LRA is not just a dream, however.
The celebrated {\em Adaptive Cross-Approximation (ACA) iterations}, which adapt to LRA the Alternating Directions  Implicit (ADI) method
\cite{LW91,
S16},  consistently compute accurate LRAs   in their worldwide computations for a large class of  real world   matrices.  
Adequate formal support for this empirical behavior is still a challenge (see Sec. \ref{srltd}), but  
it is proved that superfast algorithms
of \cite{MW17,BW18,
LP20,CETWa}
compute near-optimal
LRA of a Symmetric Positive
Semidefinite (SPSD) matrix and a Distance matrix.\footnote{The  LRA algorithms of \cite{MW17,BW18,
CETWa} are randomized, and one cannot estimate   superfast their  output error norms to verify their correctness, but the superfast deterministic  algorithm of \cite[Part III]{LP20} computes CUR LRA of an $n\times n$ SSPD matrix having both spectral and Frobenius  error norms within a factor of $n$ from optimal and  as by-product estimates error
  norm,   verifying correctness -- at no additional cost.}

\subsection{Some background (briefly)}

The $r$-{\em truncation} $M_r$ of $M$ is  obtained by means of  setting to 0 all singular values of $M$ but  the
$r$ top  ones,
$\sigma_1(M),\dots,\sigma_r(M)$. 
[Here and hereafter $\sigma_j(M)$ denotes the $j$th largest singular value of $M$.] $r$-truncation  is unique if $\sigma_r(M)>
\sigma_{r+1}(M)$.
By virtue of  Eckart-Young-Mirsky's theorem $M_r$ is an optimal rank-$r$ approximation\footnote{Here and hereafter ``rank-$r$" means
``rank at most $r$". } of an $m\times n$ matrix $M$ for any positive $r\le \min\{m,n\}$, under both spectral and Frobenius matrix norms, denoted  $||\cdot||$ and 
 $||\cdot||_F$, respectively. 

Compact SVD of $M_r$ is said to be the $r$-{\em top SVD of} $M$. Its deterministic  computation   
 is quite slow for a matrix $M$ of  a large size \cite[Fig. 8.6.1]{GL13},
 but  the random sampling and sketching  algorithms of
\cite{DMM08,HMT11,
MT20,
TYUC17,N20,TWa} approximate it closely  with a high probability {\em (whp)} by running
 much faster, and even superfast except for  the stage of sampling or  sketching.
 

\subsection{Iterative refinement of LRA}

Our goal is {\em iterative refinement} of  a crude LRA  by means of acceleration of the {\em Alternating  Least Squares (ALS)} method, 
 which specializes {\em Alternating Optimization}
(aka {\em Alternating Minimization)} method
(see \cite{CHY12,CZPA09,
JNS13,
KBV09,ORU22}, and the bibliography therein).

 Namely, given two  matrices $M\in \mathbb R^{m\times n}$ and $A_0\in \mathbb R^{m\times r}$ where $r\le \min\{m,n\}$ denotes target rank\footnote{``In practice, the target rank is rarely known in advance. $\dots$ LRA algorithms are
usually implemented in an adaptive fashion $\dots$
until the error norm satisfies the desired tolerance"
\cite[Sec. 2.4]{TYUC17}.} and where $A_0$ represents
 an initial approximation to the matrix of the left singular vectors of $M_r$, 
  one recursively
  solves the associated generalized Linear Least Squares Problems  {\em (LLSPs)} by computing matrices $B_{t+1}\in \mathbb R^{r\times n}$ and $A_{t+1}\in \mathbb R^{m\times r}$, for $t=0,1,\dots,\tau-1$ and a fixed integer $\tau>0$   
  (see Fig. \ref{fig:alt-ref-alg})  such that

\begin{figure}[hbt!]
\begin{subfigure}{.45\linewidth}
    \centering
    \resizebox{!}{3cm}{
        \begin{tikzpicture}
            \filldraw[color=black, fill=black!5, very thick] (0, 0) rectangle (5, 5);
            \filldraw[color=black, fill=black!5, very thick] (6, 0) rectangle (7, 5);
            \filldraw[color=black, fill=black!5, very thick] (7.5, 5) ;
            \draw (2.5, 2.5) circle (0pt) node{$M$};
            \draw (6.5, 2.5) circle (0pt) node{$A_0$};
            \draw (0, -0.5) circle (0pt) node[anchor=west]{Matrices $M$ and $A_0$ are an initial input};
        \end{tikzpicture}
    }
    \caption{}
\end{subfigure}\
\begin{subfigure}{.45\linewidth}
    \centering
    \resizebox{!}{3cm}{
        \begin{tikzpicture}
            \filldraw[color=black, fill=black!5, very thick] (0, 0) rectangle (5, 5);
            \filldraw[color=black, fill=black!30, very thick] (6, 0) rectangle (7, 5);
            \filldraw[color=black, fill=green!15, very thick] (7.5, 5) rectangle (12.5, 4);
            \draw (2.5, 2.5) circle (0pt) node{$M$};
            \draw (6.5, 2.5) circle (0pt) node{$A_0$};
            \draw (10, 4.5) circle (0pt) node{$B_1$};
            \draw (0, -0.5) circle (0pt) node[anchor=west]{Compute $B_1$};
        \end{tikzpicture}
    }
    \caption{}
\end{subfigure}

\medskip

\begin{subfigure}{.45\linewidth}
    \centering
    \resizebox{!}{3cm}{
        \begin{tikzpicture}
            \filldraw[color=black, fill=black!30, very thick] (0, 0) rectangle (5, 5);
            \filldraw[color=black, fill=green!15, very thick] (6, 0) rectangle (7, 5);
            \filldraw[color=black, fill=black!30, very thick] (7.5, 5) rectangle (12.5, 4);
            \draw (2.5, 2.5) circle (0pt) node{$M$};
            \draw (6.5, 2.5) circle (0pt) node{$A_1$};
            \draw (10, 4.5) circle (0pt) node{$B_1$};
            \draw (0, -0.5) circle (0pt) node[anchor=west]{Compute $A_1$};
        \end{tikzpicture}    
    }
    \caption{}
\end{subfigure}
\begin{subfigure}{.45\linewidth}
    \centering
    \resizebox{!}{3cm}{
        \begin{tikzpicture}
            \filldraw[color=black, fill=black!30, very thick] (0, 0) rectangle (5, 5);
            \filldraw[color=black, fill=black!30, very thick] (6, 0) rectangle (7, 5);
            \filldraw[color=black, fill=green!15, very thick] (7.5, 5) rectangle (12.5, 4);
            \draw (2.5, 2.5) circle (0pt) node{$M$};
            \draw (6.5, 2.5) circle (0pt) node{$A_1$};
            \draw (10, 4.5) circle (0pt) node{$B_2$};
            \draw (0, -0.5) circle (0pt) node[anchor=west]{Compute B2 (solve LLSP of the same size as in step (b))};
        \end{tikzpicture}    
    }
    \caption{}
\end{subfigure}
\caption{The first updates of the matrices $A_t$ and $B_t$ 
in Eqn. (\ref{eqllsps}).}
\label{fig:alt-ref-alg}
\end{figure}
\begin{equation}\label{eqllsps}
B_{t+1}:={\rm argmin}_Y ||A_{t}Y-M||,~
A_{t+1}:={\rm  argmin}_X ||XB_{t+1}-M||.
\end{equation}

\subsection{Deterministic iterative refinement}

By incorporating  an optimal solution of generalized LLSP (cf. \cite[Ch.2]{B15}, \cite[Ch. 5]{GL13}, and the bibliography therein), we obtain the following {\bf Alg. 1.1}: 

\begin{equation}\label{eqllsps+}
B_{t+1} := A_t^+ M,~
 A_{t+1} := MB_{t+1}^+,~t=1,2,\dots,\tau-1.
 \end{equation}
 Here and hereafter $W^+$
denotes the {\em Moore-Penrose pseudo inverse} 
 of a matrix $W$, 
 and one can complete Alg. 1.1 by applying
 any   sub-algorithm for  LLSPs of (\ref{eqllsps+}).

Our Eqn. (\ref{eqdstaur}) of Sec. \ref{scnvdtr} imposes a  restriction on  the pair of matrices  $A=A_0$ and  $M$; in particular it implies that $\sigma_{r+1}(M)<\sigma_r(M)$; the restriction becomes  severe where  $\sigma_{r+1}(M)\approx\sigma_r(M)$. 
Clearly, the restriction greatly narrows the area of application of our algorithms, but they still apply to the highly important class of kernel matrices with fast decaying spectra
(see \cite{AP23,CETWa}
and the references therein).
  
 In Sec. \ref{scnvdtr} we prove  that,
under the above  restriction and for $\tau\mapsto \infty$, 
the ranges of the matrices $U_{A_{\tau}}$  and $V_{B_{\tau}}$  of the left and right  singular  vectors  of 
 $A_{\tau}$ and $B_{\tau}$, respectively,  
 closely approximate  
 converge, in terms of the Principal Angle Distances between the subspaces (see Def. \ref{defpad}), to the ranges of the matrices $U^{(r)}$  
 and $V^{(r)}$ of the
left and right
singular vectors  of the matrix $M_r$ of the $r$-truncation of $M$,  respectively,
while the range of the matrix 
$U_{A_{\tau}}^TMV_{B_{\tau}}$ similarly converges to the range of the diagonal matrix 
$\Sigma^{(r)}$
of the singular values of $M_r$. Thus (\ref{eqllsps+})
enables us to   approximate (in terms of the associated subspaces) the SVD of $M_r$, an optimal LRA of $M$.  Alg. \ref{alggnritrrf} in Sec. \ref{scnvdtr} elaborates upon these computations, involving
 $O(mnr\tau)$ flops overall.
  
Iterations (\ref{eqllsps+}) are not superfast and serve just as a springboard for  our subsequent superfast refinement of LRA, but in spite of  the  limiting restriction
 on $A_0$ and $M$,
 our proof of the global convergence to optimal LRA may be of some interest  because it
 complements the previous impressive progress in proving  local convergence of the ALS method \cite{U12,ORU18,
ORU22}. 
 
 \subsection{Superfast randomized iterative refinement}
 
 {\em Johnson -- 
Lindenstrauss} randomized transform into a matrix of a smaller size enables
faster randomized  approximate solution of  generalized LLSP in (\ref{eqllsps})
 for any $t$. We first
 pre- and  post-multiply the first and the second matrix equations in (\ref{eqllsps}),
respectively, by random  sketching matrices $H_t\in \mathbb R^{n\times r}$ and  $G_t\in \mathbb R^{m\times r}$,
respectively,  and then compute approximate solutions of two generalized LLSPs of smaller sizes.
The known  efficient random  sketching algorithms  
output
a near-optimal solution  of an LLSP whp (see \cite{CDa,CFSa,EMNa}, and the bibliography therein).
 
 Furthermore, apart from the sketching stage of multiplication of
an input matrix $M\in\mathbb R^{m\times n}$ by matrices $H_t$ and $G_t$, these algorithms run  
 superfast, by using 
much fewer than $mn$ flops and
memory cells,\footnote{Hereafter $a\ll b$ shows that $a$ is much less than $b$ in context.} provided that 
$r^2\ll\min\{m,n\}$, but in  Sec. \ref{sreflsc} 
 we
run superfast  even
 the stage of multiplication by $H_t$ and $G_t$,\footnote{$H_t$ and $G_t$ are constructed as sampling and scaling matrices which are so sparse that each column contains at most one non-zero entry.}  for every $t$,
  because we solve  LLSPs  for skinny matrices $A_t\in \mathbb R^{m\times r}$
  and $B_t\in \mathbb R^{r\times n}$ and because we represent $M$ 
 by  means of
  adapting 
  the random sampling algorithm of \cite{DMM08}.
  Hence we run superfast the entire solution of both LLSPs in (\ref{eqllsps}). 
 

 To prove  convergence of these superfast iterations to accurate LRA whp, 
  we impose stronger restriction on the pair of matrices $M$ and $A_0$; in particular we require that $2\sigma_{r+1}(M)\le \sigma_r(M)$
 and that the ratio $\frac{||M-M_r||_F} {||M-M_r||}\frac{r}{\sqrt {q-r}}$ be small for $q=\min\{m,n\}$. 
 Then, by non-trivially extending our
deterministic Alg. \ref{alggnritrrf}, we 
 closely approximate SVD of $M_r$ whp 
in terms of the the Principal Angle Distances between the associated subspaces (see Thm. \ref{thm:altermain}).
 Moreover, unlike  Alg. \ref{alggnritrrf},   our randomized LRA  algorithm preserves sparsity of the pair of its initial
 matrices $M$ and $A_0$. 

 Like any randomized superfast LRA algorithm, our algorithm fails with  
 a  positive constant probability  on worst case input matrices $M$ and even on the $mn+1$ matrices of the families of Example  \ref{exdlt}, but   such   ``hostile'' input
 matrices have never appeared in our tests with real world inputs, in which  initial LRAs, lying reasonably (although not very) close to optimal ones
 have been  consistently
 improved significantly 
 already in a few iterations of  our randomized algorithm.

From \cite{DMM08}
we inherit the benefit   of outputting whp {\em  CUR LRA}, which is a special 
form of LRA, is
memory  efficient,
preserves structure of input data  (such as sparsity, non-negativity, interpretability), except for the factor $U$ of a small size, and is widely applied in data analysis, including Principal Component Analysis, multidimensional scaling, and factor analysis \cite{MD09}. 

 Together with the
  benefits we
 inherit from \cite{DMM08} the need  for fairly large  sampling of rows and columns of $M$ in order to  formally support high accuracy 
of its solution of LLSPs and LRA whp, 
  but sampling  much fewer  rows and columns turned out to be sufficient in  numerical tests with real world data
 both in \cite{DMM08} and our Sec. \ref{ststsitreflsc}.

\subsection{Related  Works}\label{srltd}

Adaptive Cross-Approximation (ACA) iterations for CUR LRA 
(cf. \cite{GTZ97,BK16,
OZ18,ALS24}
 and the references therein)  recursively output generator matrices $G$ of a
small size, from which one can readily generate CUR LRA superfast.
The iterations  monotone increase 
the volume  
$|\det(G^*G)|^{1/2}$ of a generator $G$ towards a local 
maximum. Whenever they  reach  global maximum within a factor of $h$,    
the spectral and Frobenius error norms of the output CUR LRA lie within 
a factor of $(r+1)h
\sqrt {mn}$ from those of 
optimal ones. 
 In their decades-long worldwide application  
 ACA iterations 
 consistently 
come close to such maxima superfast --  in a  reasonable  number of ACA steps -- for a large and important 
class of matrices $M$.
Adequate formal support for this excellent empirical efficiency remains a challenge,
although Cortinovis and Kressner  in \cite{CK20} proved that their deterministic  algorithm, running at superlinear cost, computes  a generator of maximal volume.
Unlike our algorithm,
ACA iterations 
involve no 
LLSPs,  use no randomization,
and to the best of our knowledge are not proved to  compute accurate
LRA under the assumptions of our Thm. \ref{thdtrmcnv}.


 The paper \cite{JNS13}  also uses the ALS method and principal angle distances but
   studies  completion of a coherent matrix\footnote{A matrix is coherent if its maximum row and column leverages scores are small in context.}   with exact rank $r$, rather than LRA.  Furthermore, it applies    
uniform element-wise sampling rather than the algorithms of \cite{DMM08}.

The papers \cite{CDa,CFSa,EMNa} and the references therein cover alternative randomized solutions of LLSPs, which,  however, do not support superfast LRA.

 Superfast LRA algorithms
\cite{MW17,BW18,
LP20,CETWa} are efficient for  some important
 special classes of matrices, different from ours; their techniques have little to do with ours.
 
  Classical  techniques
of iterative refinement are very  popular 
 for the solution of a linear system of equations (see 
\cite[Secs. 3.3.4 and 4.2.5]{S98}, 
\cite[Ch. 12 and Sec. 20.5]{H02}, \cite[Secs. 3.5.3 and 5.3.8]{GL13}) but are also  applied 
 in various other linear and nonlinear matrix computations (see \cite[page 223 and the references on page 225]{S98}). It is not clear if such techniques could be combined with the ALS method towards LRA.

\subsection{Organization of the paper}
 
In  the next section 
we cover further background
material.
In  Sec. \ref{scnvdtr}
we  analyze  deterministic iterative refinement of an LRA, which elaborates upon Alg. 1.1.
In Sec. \ref{slrasmp}  
we recall background techniques from \cite{DMM08}  for
generation of leverage scores
and sampling probabilities and their application to computing CUR LRA. 
In Sec. \ref{sreflsc} we present and analyze our randomized iterative refinement of LRA.  
In Sec. \ref{snmrtsts} we cover our numerical experiments.
We devote short Sec. \ref{sconc} to conclusions.
In Appendix A 
we prove that 
the random sampling  
solution of generalized LLSP from \cite{DMM08} adapted to our case remains near optimal whp, while becoming superfast for a large class of inputs.


\section{Background}\label{sbckgr}
  

\subsection{Linear Algebra Preliminaries}\label{slap}
We assume  dealing 
with real matrices in 
$\mathbb R^{p\times q}$ 
throughout,
 but our study can be readily extended to complex matrices.
Next we recall some relevant definitions,   including ones  of the introduction,
for the sake of completeness.
\begin{definition}\label{defsvd}
 \cite{GL13}.
$M=U_M\Sigma_MV^T_M$ is SVD of $M$, where
$\Sigma_M$ is the diagonal matrix of
singular values of $M$, $\Sigma_M=\diag(\sigma_j(M))_{j=1}^q$,
for $q=\min\{m,n\}$, $\sigma_j(M)=0$ for $j>\rank(M)$,
 $\sigma_j(M)$ denotes the $j$th top   singular value of $M$,
$U_M$  and $V_M$ are the matrices 
with orthonormal columns  made up  of the  left and right singular vectors of $M$, respectively.
 \end{definition}
 
 \begin{definition}\label{deftpsvd}
The $r$-{\em truncation} of $M$ is obtained by means of setting to 0 all singular values of  $M$ but its $r$ top (largest) ones. 
 The $r$-{\em top  
SVD} of  $M$ is the compact SVD (cf. \cite{GL13})
 $M_{r}=U^{(r)}\Sigma^{(r)} V^{(r)T}$ where
 the matrices  $U^{(r)}$,
$\Sigma^{(r)}$, and  $V^{(r)}$  are obtained by means of removing from $U_M$ and $\Sigma_M$ the last $m-r$ rows and from $\Sigma_M$ and $V^ T_M$ the last $n-r$ columns.
\end{definition}

$M^+:=V_M\Sigma_M^{-1} U^T_M$ is the Moore--Penrose pseudo inverse of $M$.

$||M||$ and $||M||_F$ denote  the spectral
and Frobenius norms of $M$, respectively, such that
$||M||:=\sigma_1(M)$
and $||M||_F^2:=\sum_{j=1}^{\min\{m,n\}}\sigma_j^2$.

By following \cite{HMT11} we use unified notation $|||\cdot|||$  for both of these matrix norms.

$\range(W)$ denotes the {\em range}, that is, the column span, of a matrix $W$.

 
 Applied to a subspace, the notation 
 $_{\perp}$ below defines its orthogonal complement. Applied to a matrix $M$, it defines  a matrix whose range is  the orthogonal complement of $\range(M)$.
  
\begin{definition}\label{defpad}
 \cite{JNS13}.
\label{def:principal_angle}
Let $E_1$ and $E_2$ be two subspaces of $\mathbb{R}^m$,
and let $G$, $H$, $G_{\perp}$ and $H_{\perp}$ be four matrices with orthonormal columns that generate the subspaces
$E_1$, $E_2$, and their two orthogonal
complements  in
$\mathbb{R}^m$, respectively, so 
that 
$\begin{bmatrix}
  G & G_{\perp}
\end{bmatrix}$
and
$\begin{bmatrix}
    H & H_{\perp}
\end{bmatrix}$
are orthogonal matrices.
Define 
the {\bf Principal Angle Distance} between the subspaces $E_1$ and $E_2$ (of the  same dimension) as well as between the matrices $G$ and $H$ (of the same size):
\begin{equation}
\dist(G,H):=    \dist(E_1, E_2) := ||G_{\perp}^TH||
    = ||H_{\perp}^TG||.
\end{equation}
\end{definition}

\begin{remark}\label{redst}
Let $E_1$ and
 $E_2$ be two linear subspaces of $\mathbb{R}^m$. Then

(i) $\dist(E_1, E_2)$ ranges from 0 to 1,

(ii) $ \dist(E_1, E_2) = 0$ if and only if $\textrm{Span}(E_1) = \textrm{Span}(E_2)$, 
and 

(iii) $\dist(E_1, E_2) = 1$ if $\textrm{rank}(E_1) \neq \textrm{rank}(E_2)$.
\end{remark}
 

\subsection{Optimal LRA}\label{slra}
A matrix $M$ has
$\epsilon$-rank
at most $r$
 if it admits approximation within an error norm 
$\epsilon$
 by a matrix $M'$ of rank at most $r$
 or equivalently if  there exist four matrices $A$, $B$, $M'$, and $E$ such that
\begin{equation}\label{eqlra}
M=M'+E,~|||E|||\le \epsilon~|||M|||,~M'=AB,~A\in \mathbb R^{m\times r},~{\rm and}~B\in \mathbb R^{r\times n}.
\end{equation}

The $\epsilon$-rank $\rho$ of a matrix $M$ is  numerically unstable if 
$\rho$-th and $(\rho+1)$-st or $\rho$-th and 
$(\rho-1)$-st largest singular values 
of $M$ are  close to one another, but it is quite commonly used in numerical matrix computations (cf. \cite[pages 275-276]{GL13}), and we can say that a
matrix  {\em admits LRA} if it has $\epsilon$-rank $r$ where $r$ and $\epsilon$ are  small in context.

\begin{theorem}\label{thtrnc} {\rm The Eckart-Young-Mirsky theorem: LRA by means of truncation of SVD (see 
\cite{S907,EY36,M60},
\cite[Thm. 2.4.8]{GL13}).} 
The matrix $M_r$, the $r$-truncation of SVD of $M$, defines an optimal  rank-$r$ approximation of $M$ under both spectral and Frobenius norms:
\begin{equation}\label{eqmnmspfr}
 |||M_r-M|||=\min_{X:~ \rank(X)=r} |||X-M|||, 
 \end{equation}
$||M_r-M||=
 \sigma_{r+1}(M)$  and 
 $\sigma_{F,r+1}(M):=||M_r-M||_F= \sqrt{\sum_{j> r}\sigma_j^2(M)}$.
\end{theorem}


\subsection{Hard inputs for superfast LRA}

\begin{example}\label{exdlt} {\bf  A small family of hard input matrices for superfast LRA.} Fill an $m\times n$ matrix $\Delta_{i,j}$ with 0s except for its $(i,j)$th entry filled with 1 and 
write $O_{m,n}$ to
denote the $m\times n$ null matrix, filled with 0s. Then any algorithm that misses the $(i,j)$th  
entry of its input matrix  fails to approximate within less than 1/2 one or both of the  matrices $\Delta_{i,j}$ and
$O_{m,n}$. In particular, this is the case for any  superfast algorithm. All these comments are extended to the family of $mn+1$ matrices obtained from the above $mn+1$ matrices by means of any  perturbations having  sufficiently small norms
and, with  a positive constant probability,
to randomized  algorithms.
\end{example}  
  
 
\subsection{CUR LRA}\label{scur}
  $~$For $M\in \mathbb R^{m\times n}$ and two sets $\mathcal I\subseteq\{1,\dots,m\}$  
and $\mathcal J\subseteq\{1,\dots,n\}$   define
the submatrices
$M_{\mathcal I,:}:=(m_{i,j})_{i\in \mathcal I; j=1,\dots, n}$, $M_{:,\mathcal J}:=(m_{i,j})_{i=1,\dots, m;j\in \mathcal J}$, and 
$G:=M_{\mathcal I,\mathcal J}:=(m_{i,j})_{i\in \mathcal I;j\in \mathcal J}$, 
define {\em canonical rank-$r$  CUR approximation} of $M$: 
\begin{equation}\label{eqcurd} 
M\approx M'=CUR,~C=M_{:,\mathcal J},
~ U=G_r^{+},~{\rm and}~R=M_{\mathcal I,:},  
\end{equation}     and call the matrices $G$ and  $U$ its {\em generator} and  
 {\em nucleus}, respectively. 
Call $M'$ a {\em CUR LRA} of $M$  
 if $|||M'-M|||$ and $r$ are small in context.\footnote{The pioneering  
papers \cite{GZT95,GTZ97,GZT97},   define CGR  approximations with  nuclei  $G$ standing, 
say, for ``germ". In the customary acronym
CUR  ``U" can stand, say, for  
``unification factor",  but we would arrive at CNR, CCR, and CSR 
with $N$, $C$, and $S$ standing for {\em ``nucleus", ``core", and ``seed"}.} Given $G\in \mathbb R^{k\times l}$, computation of CUR LRA only involves about $kl$
memory cells and $O((k+l)kl)$ flops.

\begin{remark}
More generally, one can allow the nucleus $U$ to be any matrix in $\mathbb R^{l\times k}$; the Frobenius error norm $||M-CUR||_F$
 is minimized for 
 $U=C^+MR^+$, in which case it holds (see \cite[Sec. 4]{S99},\cite[Eqn. (6)]{MD09})  that
$||E||_F=||M-CUR||_F\le ||M-CC^+M||_F+||M-MR^+R||_F$.
We cannot compute the nucleus  $U=C^+MR^+$ superfast, however.
\end{remark}


\section{Deterministic  iterative 
refinement of LRA}\label{scnvdtr}

Our next theorem implies  
 that the matrices $A_t$ in (\ref{eqllsps})
 satisfy
 $\dist(A_{t+1}, U^{(r)})\le c\cdot \dist(A_t, U^{(r)})$, for all $t$ and a constant $c < 1$. Therefore, $\dist(A_t, U^{(r)})\mapsto 0$ as $t\mapsto\infty$, under some 
 specific  
 assumptions about 
 the pair of matrices $M$ and $A_0$. One can view the proof of this theorem as warming up for our much more involved
proof of a similar result where we solve LLSPs in  
(\ref{eqllsps}) superfast whp 
by using random sampling from \cite{DMM08}.

\begin{theorem}\label{thdtrmcnv}
 Assume that
 a  matrix $A\in\mathbb{R}^{m\times r}$ has orthonormal columns,
\begin{equation}\label{eqsvdm} 
M = U^{(r)}\Sigma^{(r)}(V^{(r)})^T + U_\perp^{(r)}\Sigma_\perp^{(r)}(V_\perp^{(r)})^T
\end{equation}
is the SVD of a matrix  $M\in \mathbb{R}^{m\times n}$,
 $r < \min(m, n)$, $ 
B:= A^+M$,  
 \begin{equation}\label{eqdstaur}  
\sigma_{r+1}(M)<\sigma_{r}(M)~{\rm and}~\dist(A, U^{(r)}) 
< \sqrt{1 - \frac{\sigma^2_{r+1}(M)}{\sigma^2_r(M)}}.
\end{equation}
Then
  \begin{equation}\label{eqc}
\dist(B^T, V^{(r)}) < c \cdot \dist(A, U^{(r)})~{\rm for}~ c:= \frac{1}{\sqrt{1 - \dist(A, U^{(r)})^2}} \frac{\sigma_{r+1}(M)}{\sigma_r(M)}.
\end{equation}
\end{theorem}

\begin{remark}\label{rc}
Bounds (\ref{eqdstaur}) imply that $\frac{\sigma_{r+1}(M)}{\sigma_r(M)}<c<1$
for $c$ of (\ref{eqc}). As the ratio $\frac{\sigma_{r+1}(M)}{\sigma_r(M)}$ increases  to 1, so does $c$ as well and even does this faster than the ratio does unless $\dist(A, U^{(r)})=0$, that is (see part (ii) of Remark \ref{redst}), unless $\range(A)=\range(U^{(r)})$.
\end{remark}

\begin{proof}
We will use the following inequality:
 \begin{equation}\label{eqdstaur1} 
\sigma_r(A^TU^{(r)}) \ge \sqrt{1 - \dist(A, U^{(r)})^2},
\end{equation}
which holds because
 \begin{eqnarray*}
    &\big(\sigma_r(A^TU^{(r)}))^2 = \sigma_r(A^TU^{(r)}U^{(r)T}A)\\
    &= \sigma_r\big(A^T(I_m - U_{\perp}U_{\perp}^T)A\big)\\
    &= \sigma_r\big(I_r - (A^TU_{\perp})(A^TU_{\perp})^T \big)\\
    &\ge 1 - \dist(A, U^{(r)})^2. \hspace{1cm}
\end{eqnarray*}

In the latter array of relationships, the last equation holds because $A^TA=I_r$, while the  inequality  holds 
because 
the matrix $(A^TU_{\perp})(A^TU_{\perp})^T$ is Symmetric Positive Semi-Definite 
and has spectral norm $\dist(A, U^{(r)})^2$.

Since 
 $\dist(A, U^{(r)}) < 1$,  part (iii) of Remark \ref{redst} implies that 
 $\rank(A)=\rank(U^{(r)})=r$.

Ensure  that the matrix
$B := A^+M = A^TM$ has full rank $r$ by applying infinitesimal perturbation of matrix $A$ and/or $M$, then deduce (\ref{eqc}). It keeps held while we perform converse infinitesimal perturbation because
distance is a continuous function of its input matrices.

It remains to deduce 
(\ref{eqc}) in the case of a matrix $B$ of full rank.
 
Let $B= U_B\Sigma_BV_B^T$ be the SVD of $B$; then $V_B = M^TAU_B\Sigma_B^{-1}$.
Notice that 
 $$\sigma_r(B) = \sigma_r(A^TM) \ge \sigma_r(A^TU^{(r)}\Sigma^{(r)}) \ge \sigma_r(A^TU^{(r)})\sigma_r(M).$$

Combine the latter inequality with bounds (\ref{eqdstaur}) and (\ref{eqdstaur1}) and obtain
 $$||\Sigma_B^{-1}|| = \frac{1}{\sigma_r(B)} \le \frac{1}{\sqrt{1 - \dist(A, U^{(r)})^2}~\sigma_r(M)} < \frac{1}{\sigma_{r+1}(M)}.$$

Complete the proof based on the following relationships:
\begin{align}
\dist(B^T, V^{(r)}) &= ||V_B^T(V_\perp^{(r)})||\nonumber\\
&=||(V_\perp^{(r)})^T M^TAU_B\Sigma_B^{-1}||\nonumber\\
&=||(V_\perp^{(r)})^T \big(U^{(r)}\Sigma^{(r)}(V^{(r)})^T + U_\perp^{(r)}\Sigma_\perp^{(r)}(V_\perp^{(r)})^T\big)^TAU_B\Sigma_B^{-1}||\nonumber\\
&\le||\Sigma_\perp^{(r)}||\cdot ||U_\perp^{(r)}A|| \cdot ||U_B\Sigma_B^{-1}||\nonumber\\
&= \dist(A, U^{(r)})\cdot||\Sigma_\perp^{(r)}||\cdot ||\Sigma_B^{-1}||\nonumber\\
&\le \dist(A, U^{(r)}) \frac{\sigma_{r+1}(M)}{\sigma_r(B)}\nonumber\\
&\le \dist(A, U^{(r)}) \frac{1}{\sqrt{1 - \dist(A, U^{(r)})^2}} \frac{\sigma_{r+1}(M)}{\sigma_r(M)}\nonumber\\ &=c\cdot \dist(A, U^{(r)}).\nonumber\\
\label{eqn:convergent_rate}\end{align}
\end{proof}
 
\begin{remark}\label{retau}
By applying  Thm. \ref{thdtrmcnv} to matrices $M$ and $A_0$   obtain matrix $B_1$ such that $$\dist(B_1, V^{(r)}) <  c\cdot\dist(A_0, U^{(r)})$$
for $c<1$ of (\ref{eqc}).
Likewise, by applying  Thm. \ref{thdtrmcnv} to matrices 
$M^T$ and $B_1^T$ 
 obtain matrix $A_1$ such that 
$$\dist(A_1, U^{(r)}) < c\cdot\dist(B_1^T, V^{(r)}).$$
Recursively  obtain matrices $A_t$, $B_t$, $t=1,2,\dots,\tau$, such that 
$$ 
\dist(A_{t+1}, U^{(r)}) < c^2 \dist(A_t, U^{(r)})
$$
for all $t$  and $c<1$ of (\ref{eqc}).
Hence the sequence of $\dist(A_{t+1}, U^{(r)})$  converges to 0 linearly with the coefficient $c^2<1$   for $c$ of (\ref{eqc}). 
Similar argument shows that the sequence of
$\dist(B_{t+1}, V^{(r)})$  converges to 0 linearly with the  coefficient
$c^2$.
\end{remark}


\begin{remark}\label{resbsbptomtr}
Let matrix $A$ only have  full rank  rather than orthonormal columns and  let 
 $U_A\Sigma_AV_A^T$ be its   SVD. Then we can
apply the proof of Thm. \ref{thdtrmcnv} to
 $U_A$ and $B':=U_A^TM$ replacing $A$ and $B$,
 respectively.
This  implies  the theorem  also for the  pair of matrices $A$ and $B$ because   
 $U_AB' = AB$ and  $\dist(B'^T, V^{(r)}) = \dist(B^T, V^{(r)})$.
The first equation is immediate because $U_AU_A^TM = AA^+M$, while the second equation holds because $B = V_A\Sigma_A^{-1}B'$ and hence $\range(B^T) = \range(B'^T)$.
Therefore, $\range(A_t)\mapsto \range(U^{(r)})$,
 and likewise  $\range(B_t)\mapsto \range(V^{(r)})$
for $A_t$ and $B_t$ of Alg. 1.1.
\end{remark}

\begin{remark}\label{recnv}
For  $\sigma_r(M) > \sigma_{r+1}(M)$, 
the ranges of  the matrices 
$U^{(r)}$, $V^{(r)}$, $\Sigma^{(r)}
$ of (\ref{eqsvdm}) are uniquely defined, and so $\range(U_{A_t})\mapsto \range(U^{(r)})$
since $\range(A_t)\mapsto \range(U^{(r)})$. Likewise,  
$\range(V_{B_t})\mapsto \range(V^{(r)})$ since $\range(B_t)\mapsto \range(V^{(r)})$.  Hence 
$\range(U^T_{A_t}MV_{B_t})\mapsto \range(\Sigma^{(r)})$. 
Based on these observations we extend Alg. 1.1 
to approximation of the
 ranges of the matrices  $U^{(r)}$, $\Sigma^{(r)}$, and $V^{(r)}$, representing the
SVD of $M_r$.
\end{remark}

\begin{algorithm}\label{alggnritrrf}
{\rm Deterministic iterative 
refinement of LRA.}


\begin{description}

\item[{\sc Input:}] 
 A positive integer $\tau$ and two matrices $M\in \mathbb R^{m\times n}$ and $A_0\in \mathbb R^{m\times r}$ such that
$0<r<\min\{m,n\}$ and  bound (\ref{eqdstaur}) holds for $A=A_0$.

\item[{\sc Computations:}]~\\
{\bf FOR} $t = 0, 1, \dots, \tau - 1$ {\bf DO:}
\begin{enumerate}
    \item $B_{t+1} := A_t^+ M$    \item $A_{t+1} := MB^{+}_{t+1}$
\end{enumerate}

\item[{\sc Output:}]
The pair of matrices  
$A_{\tau}$, $B_{\tau}$.  
\end{description}
\end{algorithm}

 Alg. \ref{alggnritrrf} computes the   matrices $A_{\tau}$ and 
 $B_{\tau}$ by using $O(mnr\tau)$
 flops; we 
   can extend this to computing 
 the triplet of matrices  $U_{A_{\tau}},   
V_{B_{\tau}}$, and 
$
U^T_{A_{\tau}}M~V_{B_{\tau}}$ by using $O((m^2+n^2)r)$ flops \cite[Fig. 8.6.1]{GL13}.
According to Remark  \ref{recnv}, the ranges of these matrices approximate the ranges of the matrices $U^{(r)}$, $V^{(r)}$, and  $\Sigma^{(r)}$ of the $r$-top SVD of $M$.
 
By using 
 $O(mnr)$ flops one can compute Strong RRQR (Rank Revealing QR) factorization of 
$A_{\tau}$ and 
 $B_{\tau}$ instead of their SVDs \cite{GE96}; the resulting LRA would be almost as close. 

 In the next sections we will extend Alg. \ref{alggnritrrf}  to near-optimal rank-$r$ approximation of $M$ superfast whp,
 by applying random sampling and scaling of \cite{DMM08}, although, as we said in Sec. 1.1, for this extension  we need  stronger restriction on the input matrices $M$ and $A_0$.




\section{Leverage Scores, Sampling and Scaling Matrices, LLSPs, and CUR LRA}\label{slrasmp}   

  
\subsection{Overview}  
  
 To accelerate deterministic  
 iterative refinement   
 of the previous section,
we  compress 
its generalized  LLSPs by applying to them  random  sampling of \cite{DMM08}. Namely, for a fixed integer $t$, $0\le t\le \tau-1$, we successively 
compute (i) leverage scores defined by the  singular spaces  of the matrix $A_t$, (ii) 
sampling probabilities  defined by the  leverage scores (see the next two subsections),
(iii)
the sampling and scaling matrix $\mathcal S$ 
(see  Sec. \ref{ssrcs}), and (iv) the solution $B_{t+1}$ of  
 the compressed generalized  LLSP (of a smaller size)
defined by the pairs of matrices $\mathcal SA_t$ and $\mathcal SM$. 

 In Sec. \ref{serrllsp} we
 prove that whp $B_{t+1}$ is a near-optimal  solution  of the generalized LLSP
 for $M$ and $A_t$.  
 We can  readily extend this study  to
 obtain whp a near-optimal solution $A_{t+1}$ of generalized LLSP for the pair $M$ and $B_{t+1}$. 
 
 In the next section we apply our solution of generalized LLSPs recursively to arrive at randomized implementation of 
  iterative refinement   (\ref{eqllsps}).  
Fig. \ref{fig:llsp-samp} shows a single step of the above recursive process, for $A_t$ represented  with $A$.  

 The computation of 
leverage scores
is the bottleneck 
of the randomized compression LLSP algorithm of \cite{DMM08}, applied to an $m\times n$ matrix  $M$,
but this computation is simplified in our case because we only apply it for LLSPs with matrices $A_t$ and $B_{t}$ of smaller sizes.
 
 \begin{figure}[hbt!]
\begin{subfigure}{.45\linewidth}
    \centering
    \resizebox{!}{3cm}{
        \begin{tikzpicture}
            \filldraw[color=black, fill=black!5, very thick] (0, 0) rectangle (2.5, 5);
            \filldraw[color=black, fill=black!5, very thick] (3.5, 0) rectangle (4.5, 5);
            \filldraw[color=black, fill=black!5, very thick] (5.5, 5) rectangle (8, 4);
            \draw (1.25, 2.5) circle (0pt) node{$M$};
            \draw (3, 2.5) circle (0pt) node{$-$};
            \draw (4, 2.5) circle (0pt) node{$A$};
            \draw (6.75, 4.5) circle (0pt) node{X};
            \draw (0, -0.5) circle (0pt) node[anchor=west]{Solve $Y = \argmin_X ||M - AX||_F$};
            
        \end{tikzpicture}
    }
    \caption{}
\end{subfigure}
\begin{subfigure}{.45\linewidth}
    \centering
    \resizebox{!}{3cm}{
        \begin{tikzpicture}
            \filldraw[color=black, fill=black!30, very thick] (-4, 2.5) rectangle (-1.5, 5);
            \draw (-3, 3.75) circle (0pt) node{$M'$};
            
            \draw (-1, 4) circle (0pt) node{$=$};
            
            \filldraw[color=black, fill=green!15, very thick] (-0.5, 2.5) rectangle (4.5, 5);
            \draw (2, 3.75) circle (0pt) node{$\mathcal{S}$};

            \fill (5, 4) circle (2pt);
            
            \filldraw[color=black, fill=black!5, very thick] (5.5, 0) rectangle (8, 5);
            \draw (6.5, 3.75) circle (0pt) node{$M$};
            
            \draw (-4, -0.5) circle (0pt) node[anchor=west]{Construct sampling and scaling matrix $\mathcal{S}$ and obtain $M'$};
        \end{tikzpicture}
    }
    \caption{}
\end{subfigure}

\medskip

\begin{subfigure}{.45\linewidth}
    \centering
    \resizebox{!}{3cm}{
        \begin{tikzpicture}
            \filldraw[color=black, fill=black!30, very thick] (-2.5, 2.5) rectangle (-1.5, 5);
            \draw (-2, 3.75) circle (0pt) node{$A'$};
            
            \draw (-1, 4) circle (0pt) node{$=$};
            
            \filldraw[color=black, fill=green!15, very thick] (-0.5, 2.5) rectangle (4.5, 5);
            \draw (2, 3.75) circle (0pt) node{$\mathcal{S}$};

            \fill (5, 4) circle (2pt);
            
            \filldraw[color=black, fill=black!5, very thick] (5.5, 0) rectangle (6.5, 5);
            \draw (6, 3.75) circle (0pt) node{$A$};
            
            \draw (-4, -0.5) circle (0pt) node[anchor=west]{Use the same sampling and scaling matrix $\mathcal{S}$ and obtain $A'$};
        \end{tikzpicture}
    }
    \caption{}
\end{subfigure}
\begin{subfigure}{.45\linewidth}
    \centering
    \resizebox{!}{3cm}{
        \begin{tikzpicture}
            \filldraw[color=black, fill=black!30, very thick] (0, 2.5) rectangle (2.5, 5);
            \filldraw[color=black, fill=black!30, very thick] (3.5, 2.5) rectangle (4.5, 5);
            \filldraw[color=black, fill=black!5, very thick] (5.5, 5) rectangle (8, 4);
        
            \draw (1, 3.5) circle (0pt) node{$M'$};
            \draw (4, 3.5) circle (0pt) node{$A'$};
            \draw (6.75, 4.5) circle (0pt) node{$X$};
            \draw (3, 3.5) circle (0pt) node{$-$};
            \draw (-1, 0.4) circle (0pt) node[anchor=west]{Compute sub-sampled problem $Y' = \argmin_X ||M' - A'X||_F$ };
            \draw (-1, -0.6) circle (0pt) node[anchor=west]{$Y'$ is an approximation of $Y$ in the sense that $||M-AY'||_F \approx ||M-AY||_F$};
        \end{tikzpicture}    
    }
    \caption{}
\end{subfigure}
\caption{Part (a)
displays the original problem  of solving LLSP (defined by two matrices                                                                                                                                                                                                                                                                                                                         $A$ and $M$). Parts (b) and (c) show compression of these   matrices into two matrices  $M'$ 
and $A'$ of  smaller sizes.
Part (d) displays 
compressed problem
defined by the pair of matrices $M'$ 
and $A'$.}
\label{fig:llsp-samp}
\end{figure}

\subsection{Leverage Scores and Sampling Probabilities: Definition}\label{slevscr}

 Leverage scores
are defined by
left (respectively, right) singular spaces of the matrices $A_t$ (respectively, $B_t$) of (\ref{eqllsps});  in Remark \ref{rescrtoprb}  we extend their  computation to 
sampling probabilities.
Unlike \cite{DMM08}
 neither scores nor probabilities depend on the matrix $M$ in our case.

Next we unify the definition of leverage scores
 by writing $W$ for
$A_t$ and $B_t$ and for all
$t=0,1,2, \dots,\tau-1$.

\begin{definition}\label{def:leverage_score}
(See \cite{DMM08}.)
Given a ${g\times h}$ matrix $W$,
with $\sigma_r(W) > \sigma_{r+1}(W)$, and its 
$r$-top SVD  $W_{r}=U^{(r)}\Sigma^{(r)} V^{(r)T}$,
write 
\begin{equation}\label{eqsmpl}
    \gamma_i := \sum_{j = 1}^{r} v_{i, j}^2, ~\textrm{ for } i = 1,
    \dots,h,~{\rm and}~
\end{equation}
\begin{equation}
    \Tilde \gamma_i: = \sum_{j = 1}^{r} u_{i, j}^2, ~\textrm{ for } i = 1,\dots,g,
    \end{equation}
where $v_{i, j}$ and $u_{i, j}$ denote the $(i, j)$-th entry of 
matrices $V^{(r)}$ and $U^{(r)}$, respectively. 
Then call $\gamma_i$ and $\Tilde \gamma_i$ the rank-$r$ {\bf Column} and {\bf Row Leverage Scores}  of $W$, respectively.
\end{definition}

\begin{remark} The 
 $r$-top left and right singular spaces of $W$
 are uniquely defined
 if $\sigma_r(W) > \sigma_{r+1}(W)$. In particular this holds where $W$ is a rank-$r$ matrix $A_t$
 or $B_t$.
 \end{remark}

\begin{remark}\label{rescrtoprb} The row/column leverage scores scaled by $r$  define  probability distributions because $\sum_{i = 1}^m \Tilde \gamma_i = \sum_{i=1}^n \gamma_i = r$. 
 By following \cite{DMM08}  we allow soft probability distributions   $\{p_i|i = 1,\dots,n\}$ and $\{\Tilde{p}_i|i=1,\dots,m\}$: given $\{\gamma_i|i = 1,\dots,n\}$  and $\{\Tilde{\gamma}_i|i=1,\dots,m\}$, fix $\beta$, $0<\beta\le 1$, allow  
to increase sampling size by a factor of $\beta^{-1}$, and only 
 require that  
 \begin{equation}\label{eqlevsc}
 p_i > 0,~ 
 p_i \ge \beta\gamma_i/r
~{\rm for}
~i=1,\dots,n,~{\rm and}~
\sum_{i=1}^np_i=1, 
\end{equation}
\begin{equation}\label{eqlevsc_row}
 \Tilde p_i > 0,~ 
 \Tilde p_i \ge \beta\Tilde \gamma_i/r
~{\rm for}
~i=1,\dots,m,~
{\rm and}~
\sum_{i=1}^m\Tilde p_i=1,
\end{equation}
\end{remark}

\begin{remark}\label{rescrprbsprf} 
The computation of the
SVDs of the matrices $A_t$ and $B_t$ involves $O((m+n)r^2)$ flops; it is superfast for $r^2\ll \min\{m,n\}$,
and then so 
is the above computation of
leverage scores and probabilities,
unlike such computations in \cite{DMM08}, involving the SVD of $M$. 
\end{remark} 


\subsection{Sampling and Scaling Matrices}\label{ssrcs}

\cite[Algs. 4 and 5]{DMM08} define
sampling and scaling for a matrix $M\in \mathbb R^{m\times n}$. Next we adapt them  to a matrix 
$A\in \mathbb R^{m\times r}$.
We omit straightforward extension of this adaptation to 
a matrix 
$B\in \mathbb R^{n\times r}$.  

\begin{algorithm}\label{algsmplex} (\cite[Alg. 4]{DMM08},
{\rm The Exactly($l$) Sampling and Scaling.})


\begin{description}
 
 
\item[{\sc Input:}] 
Two integers $l$ and $m$ such that $1\le l\le m$ and $m$ 
positive scalars $\tilde p_1,\dots,\tilde p_m$
such that $\sum_{i=1}^m\tilde p_i = 1$.


\item[{\sc Initialization:}]
Let $\widehat S:=O_{m,l}$ and $\widehat D:=O_{l,l}$ be 
 matrices filled with zeros. 

\item[{\sc Computations:}] ~ 
 
{\bf FOR} $t = 1,\dots,l$ {\bf DO}

\begin{enumerate}
\item Generate 
an independent random integer $N$ from $\{1,\dots,m\}$ such that $\textrm{Prob}\{N = i\} = \tilde p_i$;
\item Set $\widehat s_{N, t}: = 1$, {\it i.e.}, set the $N$-th element of the $t$-th column of $\widehat S$ to 1;
\item Set $\widehat d_{t, t}: = 1/\sqrt{l\tilde p_N}$;
\end{enumerate}

{\bf END FOR}

\medskip




\item[{\sc Output:}] $l\times m$ sampling and scaling matrix $\mathcal S:=(\widehat S\widehat D)^T$ for $m\times l$ sampling matrix $\widehat S=(\widehat s_{i,t})_{i,t=1}^{m,l}$ and
$l\times l$ scaling matrix 
$\widehat D=\diag(\widehat  d_{t,t})_{t=1}^l$.

\end{description}
\end{algorithm}
Except for 
Remark \ref{relratocur}
we will only use  sampling matrices $\widehat S$
and  scaling matrices $\widehat D$ implicitly  -- represented by their product $\mathcal S$.

\begin{remark}\label{relsccmpl}
The algorithm involves $l$ multiplications,  
$l$ divisions, and
computes
$l$ square roots for $l=O(r^2\log(r)/\epsilon^2)$. Thus, the algorithm is superfast provided that $r^2\log(r)\ll mn$ and $\epsilon>0$ is a constant.
 \end{remark}

\noindent \cite[Alg. 5 (Expected($l$))]{DMM08}, an alternative 
to Alg. \ref{algsmplex}, computes 
random sampling  and  scaling matrices. Their size is
   randomly determined  and is expected to be much smaller than in Alg. \ref{algsmplex}. This local acceleration, however, little affects the overall complexity of our  refinement of LRA,
which depends on the strong lower bound  on $r$ in Remark \ref{rescrprbsprf},
and we 
simplify our presentation  by only using Alg. \ref{algsmplex}.

\subsection{Randomized Solution of Generalized LLSP: Output Errors}\label{serrllsp}

Instead of costly  computation of optimal solution $X:=A^+M$
to the  
generalized LLSP
$$X={\rm argmin}_Y||AY-M||,$$ we  faster compute optimal solution
 $\Tilde{X} := (SA)^+SM$ to the compressed problem
 $$\Tilde{X}={\rm argmin }_Y||\mathcal SAY-\mathcal SM||$$ 
 for a sampling and scaling matrix $\mathcal S$.
 The following theorem, adapted from  \cite[Thm. 5]{DMM08} and proved in Appendix \ref{appendix:pfdmm08thm5}, shows that an
optimal solution of the compressed problem is  near-optimal for the original one.

\begin{theorem}\label{thm:dmm08thm5}
Let  
$\Tilde \gamma_i$ for $i = 1,\dots,m$ be the rank-$r$ row leverage scores of a rank-$r$ matrix $A\in\mathbb{R}^{m\times r}$
  and let $M\in\mathbb{R}^{m\times n}$.
  Fix three
  positive numbers
$\epsilon < 1/2$, $\xi < 1/3$,
 and $\beta \le 1$
 and
compute probability distribution $\{\Tilde p_i| i=1,\dots,m\}$ satisfying 
relationships 
(\ref{eqlevsc_row}).
Write~\footnote{ The large constant factor 1296 in the estimates throughout the paper comes from \cite{DMM08},
but in the tests with real world inputs   both in \cite{DMM08} and our  paper much smaller row and column samples were sufficient. E.g., in our tests in Sec. \ref{ststsitreflsc}  we succeeded by sampling just $15r$ rows and columns.} 
\begin{equation}\label{eql}
l: = \lceil1296\beta^{-1} r^2\epsilon^{-2}\xi^{-4}\rceil \end{equation}
 and let $\mathcal S$ be the 
sampling and scaling matrix  
 output by Alg. \ref{algsmplex}. 
Then 
\begin{equation}
    \textrm{rank}(SA)= r \hspace{0.5cm} \textrm{and} \hspace{0.5cm} ||A\Tilde{X} - M||_F \le (1 + \epsilon)||AA^+M - M||_F
     \label{eqn:error_bound}
\end{equation}
with a probability no less than $1 - \xi$
where
\begin{equation}\label{eqrndllsp}
    \Tilde{X} := (\mathcal SA)^+\mathcal SM.
\end{equation}
\end{theorem} 

We apply this theorem to the matrices $A=A_t$ in our iterative LRA algorithm and can immediately  extend it to the matrices $B_t$. 

According to that theorem, whp $\tilde X$ of (\ref{eqrndllsp}) is a near-optimal solution of generalized   LLSP, computed superfast in \cite{DMM08}
provided that  leverage scores are known.

\section{Iterative refinement of LRA: an algorithm and analysis}\label{sreflsc}
 
 In this section we apply our solution of generalized LLSPs recursively to arrive at a superfast randomized implementation of 
  iterative refinement   (\ref{eqllsps}) of LRA.  
  
  We prove in our Main Theorem  that whp $\range(A_t)\mapsto
 \range(U^{(r)})$,
 and similarly
 $\range(B_t)\mapsto
 \range(V^{(r)})$
 as $t\mapsto\infty$,
 where $M=U^{(r)}\Sigma^{(r)} V^{{(r)}T}$
 denotes the SVD of $M_r$.
 
 As in Remark \ref{resbsbptomtr}, we can readily extend 
 the above results to the convergence whp 
 $\range(U_{A_t})\mapsto \range(U^{(r)})$, $\range(V_{B_t})\mapsto \range(V^{(r)})$, and  $\range(U_{A_t}^TMV_{B_t})\mapsto \range(\Sigma^{(r)})$ and hence to approximation whp 
  of the SVD of $M_r$
  in terms of the ranges of the associated matrices, 
 but we only need this extension for $t=\tau$. Furthermore, we obtain an LRA without explicitly multiplying $M$ by $U_{A_t}^T$
 or $V_{B_t}$.????????????????????????????????????????????????????
{\bf Qi, I hope this paragraph is correct, but if you have extra time, please double check.}

\subsection{The algorithm} 

Any  solution of LLSP, e.g., by means computing Moore-Penrose pseudo inverse, can be used at stages 3 and 6 of our next algorithm.

\begin{algorithm}\label{algalter} 
{\rm [Iterative  Refinement of LRA by Using Sampling Probabilities from \cite{DMM08}.]} 


\begin{description}
  

\item[{\sc Input:}] Two matrices
$M\in \mathbb R^{m\times n}$ and 
$A_0\in \mathbb R^{m\times r}$
for $1\le r\le \min\{m,n\}$, a positive integer $\tau$, and two positive numbers $\epsilon$ and $\xi < 1$.

\item[{\sc Initialization:}]
Fix a real $ \beta$, $0< \beta \le 1$.
\item[{\sc Computations:}]~\\
{\bf FOR} $t = 0, 1,\dots, \tau-1$ {\bf DO:}

\begin{enumerate}
\item
Compute the row leverage scores $\tilde \gamma_i$ of $A_t$ and probabilities $\tilde p_i$ satisfying (\ref{eqlevsc_row}) for $i = 1,\dots, m$. 
\item
Apply Alg. \ref{algsmplex} 
for $l = 1296\beta^{-1} r^2\epsilon^{-2}\xi^{-4}$ and probabilities $\tilde p_i$ to
compute  sampling   and  scaling   
matrix $\mathcal S$.
\item
Compute $B_{t+1} = (\mathcal SA_t)^+\mathcal SM$.
\item
Compute the column leverage scores $ \gamma_i$ of $B_{t+1}$
  and  
probabilities  $p_i$ satisfying (\ref{eqlevsc}) for $i = 1,\dots, n$. 
 \item
Apply Alg. \ref{algsmplex}  for $l = 1296\beta^{-1} r^2\epsilon^{-2}\xi^{-4}$ and  probabilities $ p_i$ replacing $\tilde p_i$ to
 update the sampling   and   scaling  
matrix $\mathcal S$. 
\item
Compute $A_{t+1} = M\mathcal S^T(B_{t+1}\mathcal S^T)^+$.

\end{enumerate} 
{\bf END FOR}



\item[{\sc Output:}] $A_{\tau}$ and $B_{\tau}$.

\end{description}
\end{algorithm}

The algorithm is superfast provided that $\tau r^2\ll \min\{m,n\}$ because the computational  cost of its $t$th iteration for every $t$ is dominated by the cost of generation of the leverage scores
(which is sublinear for $r^2\ll\min\{m,n\}$ according to  Remarks \ref{rescrprbsprf}
and \ref{relsccmpl})  
and of multiplication of $M$ by $\mathcal S$ and $\mathcal S^T$, reduced to
selection and scaling of $l$ rows and $l$ columns of the matrix $M$, while we choose $l=O(r^2)$.

\begin{remark}\label{relratocur}
The algorithm outputs $A_\tau$ and $B_\tau$ such that 
$A_\tau B_\tau \approx M$ but can be readily modified to output a CUR approximation of $M$.
Indeed, at step 2  of the last iteration, for $t=\tau-1$,
we only output  sampling and scaling matrix $\mathcal{S}$
and update it at step 5 but actually we first compute
 scaling matrices $D_1$ and $D_2$ and sampling matrices $S_1$ and $S_2$ 
and then obtain $\mathcal{S} = D_1S_1$ in step 2 and $\mathcal{S} = D_2S_2$ in step 5.
Now let us reuse both sampling and both scaling matrices and obtain
$$
B_\tau = (D_1S_1A_{\tau-1})^+D_1S_1M = (D_1S_1A_{\tau-1})^+D_1R 
$$
and
$$
A_\tau = MS_2^TD_2^T(B_\tau S_2^TD_2^T)^+ = CD_2^T(B_\tau S_2^TD_2^T)^+,
$$
for column and row submatrices  $C$ and $R$ of $M$, respectively (possibly, with repeated columns and rows).
Then $A_\tau  B_\tau = CUR$ for $U = D_2^T(B_\tau S_2^TD_2^T)^+(D_1S_1A_{\tau-1})^+D_1$.
\end{remark}

\subsection{Linear convergence: basic estimates}\label{appendix:pfbasic}

Next we specify  the
principal angle distances in   Alg. \ref{algalter}.

\begin{theorem}\label{thm:contracting_distance}
Let 
$$M = 
\begin{bmatrix}U^{(r)} & U_{\perp}\end{bmatrix}
\begin{bmatrix}\Sigma^{(r)} & \\ & \Sigma_{\perp} \end{bmatrix}
\begin{bmatrix}V^{(r)T} \\ V_{\perp}^T \end{bmatrix}$$
be the SVD of $M\in\mathbb R^{m\times n}$ where $r\le \min\{m, n\}$, let  $\sigma_r(M) > \sigma_{r+1}(M)$, and let $A$ be an $m\times r$ matrix  such that
\begin{equation}\label{eqn:con_dist1}
   \delta:=\dist(A, U^{(r)})  < 1.
\end{equation} 
Fix three positive numbers $\epsilon < 1/2$, $\xi < 1/3$, and $\beta \le 1$ 
and compute the rank-$r$ row leverage scores $\{\gamma_i| i=1,\dots,m\}$ of $A$
and a sampling probability  distribution $\{p_i| i = 1,\dots,m\}$ satisfying (\ref{eqlevsc_row}).
Suppose that 
Algorithm \ref{algsmplex} applied
for $l = 1296\beta^{-1} r^2\epsilon^{-2}\xi^{-4}$
outputs a sampling and scaling  matrix 
$\mathcal{S}$.
Write $B: = (\mathcal{S}A)^+\mathcal{S}M$.
Then 
\begin{equation}\label{eqn:con_dist2}
    \dist(B^T, V^{(r)}) \le c \cdot \dist(A, U^{(r)}) + e,
\end{equation}
with a  probability at least $1 - \xi$
for 
$$c :=\frac{1}{\sqrt{1 - \delta^2}}\cdot \frac{\sigma_{r+1}(M)}{\sigma_r(M)}
~\textrm{and}~ 
e := 2c\frac{\sigma_{F, r+1}(M)}{\sigma_{r+1}(M)}\sqrt{\epsilon}\le 2c\sqrt{(\rank(M)-r)\epsilon}.$$
\end{theorem}

\begin{proof}
Let $U_A\Sigma_AV_A^T=A$ be a compact SVD where $\Sigma_A, V_A\in\mathbb{R}^{r\times r}$; then 
 recall that  $\begin{bmatrix} U_A & (U_A)_{\perp} \end{bmatrix}$ is an orthogonal matrix 
and therefore
$\dist(A,U^{(r)}) = 
||(U_A)_{\perp}^TU^{(r)}|| = 
||U_A^TU_{\perp}||$.

By virtue of Thm. \ref{thm:dmm08thm5}, $\textrm{rank}(\mathcal{S}A) = r$ with a probability no less than $1 - \xi$, 
and we assume that $\mathcal{S}A$ has full rank for the rest of the proof.

Now write  $B' := \Sigma_A^{-1}V_A^TB$
and, since $\Sigma_AV_A^T$ is a $r\times r$ full rank matrix, obtain
$$ B' = \Sigma_A^{-1}V_A^T(\mathcal{S}A)^+\mathcal{S}M=(\mathcal{S}U_A)^+\mathcal{S}M,~~
 \dist(B^T, V^{(r)}) = \dist(B'^T, V^{(r)}),
$$
$$ AB = A(\mathcal{S}A)^+\mathcal{S}M = U_A^T(\mathcal{S}U_A^T)^+\mathcal{S}M = U_AB'.
$$
Then there exists a 
QR factorization of $B'$ such that 
$$    B' = RQ^T ~\textrm{ and }~ Q^T = R^{-1}B' \in\mathbb{R}^{r\times n}. 
$$ 

Express the matrix $(SU_A)^+S$
as follows:
\begin{equation}
    (\mathcal{S}U_A)^+\mathcal{S} = \begin{bmatrix}C_1 & C_2\end{bmatrix} \cdot \begin{bmatrix}U_A^T \\ (U_A)_{\perp}^T \end{bmatrix}, \label{eqn:lin_comb_sgs}
\end{equation}
for a unique pair  of matrices $C_1$ and $C_2$.
Deduce that 
\begin{eqnarray*}
   & \dist(B'^T, V^{(r)})= ||Q^TV_{\perp}||
   \\
                &= ||R^{-1}(\mathcal{S}U_A)^+\mathcal{S}MV_{\perp}||
                \\
                &= ||R^{-1}(\mathcal{S}U_A)^+\mathcal{S}U_{\perp}\Sigma_{\perp}||
                \\\label{eqn:sgsu}
                &\le ||R^{-1}||~~
                ||(C_1U_A^T + C_2(U_A)_{\perp}^T) U_{\perp}\Sigma_{\perp}||
                \\ 
                &\le \frac{1}{\sigma_r(B')} \big( ||C_1(U_A)^TU_{\perp}\Sigma_{\perp}||
                 + ||C_2(U_A)_{\perp}^TU_{\perp}\Sigma_{\perp}||
                 \big). 
\end{eqnarray*}
 
 Next deduce  bound (\ref{eqn:con_dist2}) 
provided that the following properties hold 
with a probability no less than $1-\xi$: 

(1) $C_1 = I_r$,  

(2) $||C_2(U_A)_{\perp}^TU_{\perp}\Sigma_{\perp}||  
\le 2\sqrt{\epsilon} ||\Sigma_{\perp}||_F = 2\sqrt{\epsilon}\sigma_{F, r+1}(M)$, and 

(3) $\sigma_r(B') \ge \sqrt{1 - \delta^2}~\sigma_r(M)$.

 Indeed,
\begin{align*}
\dist(B'^T, V^{(r)}) &\le \frac{1}{\sigma_r(B)}\cdot\Big(
||U_A^TU_{\perp}||\cdot||\Sigma_\perp|| +  ||C_2(U_A)_{\perp}^TU_{\perp}\Sigma_{\perp}||
\Big) \\
&\le \frac{\sigma_{r+1}(M)}{\sqrt{1 - \delta^2}\sigma_r(M)}\cdot \dist(A, U^{(r)})
    +
\frac{2\sqrt{\epsilon}\sigma_{F, r+1}(M)}{\sqrt{1 - \delta^2}\sigma_r(M)}\\
&= c \cdot \dist(A, U^{(r)}) + e.
\end{align*}

It remains to prove properties  (1) -- (3)
above. 
\medskip

\noindent {\bf Property (1):} 
 Eqn. (\ref{eqn:error_bound})
implies that the matrices $\mathcal{S}A$ and hence $\mathcal{S}U_A$ have full rank $r$. Hence
$$    (\mathcal{S}U_A)^+\mathcal{S}U_A = I_r,
$$
while by definition
\begin{align*}
(\mathcal{S}U_A)^+\mathcal{S}U_A &= (C_1U_A^T + C_2(U_A)_{\perp}^T)U_A\\
&= C_1U_A^TU_A + C_2(U_A)_\perp^TU_A\\
&= C_1.
\end{align*}    
\medskip
\noindent {\bf Property (2):} 
Consider the following generalized LLSP,
   $$\min_{X} ||Y - U_AX ||_F$$
where  $Y = (U_A)_{\perp}(U_A)_{\perp}^TU_{\perp}\Sigma_{\perp}$ denotes 
an $m\times (n-r)$ matrix.
Clearly,  $\min_{X} ||Y - U_AX ||_F = ||Y||_F$ 
because  
 $\range(Y)$ is orthogonal to  $\range(U_AX)$.

Furthermore, recall that  $\range(Y)$ and $\range(A)$ are  
orthogonal to one another. Combine this observation with 
 (\ref{eqn:lin_comb_sgs}) and deduce that
 \begin{eqnarray*}
    &||Y - U_A(\mathcal{S}U_A)^+\mathcal{S}Y||_F^2\\
    =& || Y - U_A(C_1U_A^T + C_2(U_A)_{\perp}^T)Y||_F^2\\
    =& || Y - U_AU_A^TY - U_AC_2(U_A)_{\perp}^TY||_F^2\\
    =& ||Y||_F^2 + ||U_AC_2(U_A)_{\perp}^TY||_F^2.
\end{eqnarray*} 
Recall from 
 (\ref{eqn:error_bound}) that $$||Y - A(\mathcal{S}A)^+\mathcal{S}Y||_F^2 \le (1+\epsilon)^2||Y||_F^2$$ and conclude that $$||C_2(U_A)_{\perp}^TY||_F < \sqrt{2\epsilon + \epsilon^2}||Y||_F < 2\sqrt{\epsilon}||\Sigma_{\perp}||_F.$$
\medskip
\noindent {\bf Property (3):}
Recall that $B' = (\mathcal{S}U_A)^+\mathcal{S}M$, and therefore
\begin{eqnarray*} 
   & \sigma_r(B') = \sigma_r\big( (U_A^T + C_2(U_A)_{\perp}^T)M \big)
???????????????????????????????????????????????????     
   \\
    &\ge \sigma_r(U_A^TM)\\
    &\ge \sigma_r(U_A^TU^{(r)}\Sigma^{(r)})\\
    &\ge \sigma_r(U_A^TU^{(r)}) \cdot \sigma_r(M).
\end{eqnarray*}

Combine this bound on $\sigma_r(B')$
with (\ref{eqdstaur1}) and obtain
 that $\sigma_r(B') \ge \sqrt{1 - \delta^2}~\sigma_r(M)$,  which implies that $\textrm{rank}(B') = r$.
\end{proof}


\subsection{Linear convergence under weaker assumptions}\label{appendix:pf}

\noindent To simplify notation, write 
$\sigma_j:=\sigma_j(M)$ for $j=1,2,\dots,r$ and $\Bar\sigma_{r+1}:= \sigma_{F, r+1}(M) = ||M - M_r||_F$. 

\noindent Our next goal is  the bound
$\dist(B^T, V^{(r)}) < c'\cdot \dist(A, U^{(r)})$ for a constant $c' < 1$ under a small lower bound on $\dist(B^T, V^{(r)})$ and  under the additional bounds $\delta<1/2$ and $\sigma_r< 2\sigma_{r+1}$, but we will begin
with  weaker upper bounds on that distance   and $\delta$.

\begin{lemma}\label{lemma:alt_ref}
Let $m, n, r, \epsilon, \delta$, $M$, $U^{(r)}$, $V^{(r)}$, $A$, $B$, $c$, and $e$ be defined as in Thm. \ref{thm:contracting_distance} such that (\ref{eqn:con_dist1}) and (\ref{eqn:con_dist2}) hold.
  Furthermore, let $\frac{\sigma_{r+1}}{\sigma_r}<\sqrt{1-\delta^2}$
for $\delta:=\dist(A,U^{(r)})$ of (\ref{eqn:con_dist1}).
Then
$$
\dist(B^T, V^{(r)}) <\max\Big\{ \dist(A,U^{(r)}),
 \frac{2\Bar\sigma_{r+1}}{\sigma_r\sqrt{1-\delta^2} - \sigma_{r+1}}\sqrt{\epsilon}\Big\}.
$$
\end{lemma}

\begin{proof}
Notice that
\begin{align}
    e &= \frac{2\Bar\sigma_{r+1}}{\sqrt{1-\delta^2}~\sigma_r}\sqrt{\epsilon}\nonumber\\
    &= \frac{1}{\sqrt{1-\delta^2}}\cdot\frac{\sigma_{r+1}}{\sigma_r}\cdot\frac{2\Bar\sigma_{r+1}}{\sigma_{r+1}}\sqrt{\epsilon}\nonumber\\
    &= c \cdot \frac{2\Bar\sigma_{r+1}}{\sigma_{r+1}}\sqrt{\epsilon}.\label{eqn:err_ineq}
\end{align}
The bound $\frac{\sigma_{r+1}}{\sigma_r}<\sqrt{1-\delta^2}$ implies that $c := \frac{\sigma_{r+1}}{\sqrt{1-\delta^2}\sigma_r} < 1$. \\
 (a) If
$\dist(A, U^{(r)}) > \frac{c}{1-c}\frac{2\Bar\sigma_{r+1}}{\sigma_{r+1}}\sqrt{\epsilon}$
or equivalently if
$\epsilon < \Big( \frac{\sigma_{r+1}}{2\Bar\sigma_{r+1}}\frac{(1-c)}{c}\dist(A, U^{(r)}) \Big)^2$,
then
\begin{align*}
    e &< c \cdot \frac{2\Bar\sigma_{r+1}}{\sigma_{r+1}}\sqrt{\epsilon}\\
    &< (1-c)\cdot \dist(A, U^{(r)}).
\end{align*}
Hence  (\ref{eqn:con_dist2}) implies that
\begin{align*}
\dist(B^T, V^{(r)}) &\le c\cdot \dist(A,U^{(r)}) + e\\
&< \dist(A,U^{(r)}).
\end{align*}

\medskip

\noindent (b) If
$\dist(A, U^{(r)}) \le \frac{c}{1-c}\frac{2\Bar\sigma_{r+1}}{\sigma_{r+1}}\sqrt{\epsilon}$,
then  (\ref{eqn:con_dist2}) implies that
\begin{align}
\dist(B^T, V^{(r)}) &\le c\cdot \dist(A,U^{(r)}) + e\nonumber\\
&< \frac{c^2}{1-c}\frac{2\Bar\sigma_{r+1}}{\sigma_{r+1}}\sqrt{\epsilon} + c \cdot \frac{2\Bar\sigma_{r+1}}{\sigma_{r+1}}\sqrt{\epsilon}\nonumber\\
&<\frac{1}{1/c-1}\frac{2\Bar\sigma_{r+1}}{\sigma_{r+1}}\sqrt{\epsilon}\\ \label{eqn:error_term}
&=\frac{\sigma_{r+1}}{\sigma_r\sqrt{1-\delta^2} - \sigma_{r+1}}\frac{2\Bar\sigma_{r+1}}{\sigma_{r+1}}\sqrt{\epsilon}\nonumber\\
&=\frac{2\Bar\sigma_{r+1}}{\sigma_r\sqrt{1-\delta^2} - \sigma_{r+1}}\sqrt{\epsilon}.\nonumber
\end{align}
\end{proof}

Next we strengthen the estimate of  Lemma \ref{lemma:alt_ref} under some additional assumptions on the ratio $\sigma_r/\sigma_{r+1}$ and 
$\delta$.

\begin{corollary}\label{coro:iter_ref}
Let $m, n, r, \epsilon, \delta$, $M$, $U^{(r)}$, $V^{(r)}$, $A$, $B$, $c$, and $e$ be defined as in Thm. \ref{thm:contracting_distance}, such that   (\ref{eqn:con_dist1}) and (\ref{eqn:con_dist2}) hold.
Let $\delta:=\dist(A,U^{(r)}) \le 0.5$, $\theta:=
\frac{\Bar\sigma_{r+1}}{\sigma_{r+1}}$, and $\frac{\sigma_{r+1}}{\sigma_r} \le 0.5$.
Then 
\begin{equation}\label{eqtht}
\dist(B^T, V^{(r)}) <\max \Big\{ 0.87\cdot\dist(A,U^{(r)})
,4\theta\sqrt{\epsilon}\Big\}~.
\end{equation}
\end{corollary}
\begin{proof}
Bounds $\delta \le 0.5$ and $\frac{\sigma_{r+1}}{\sigma_r} \le 0.5$ together
imply that $c := \frac{\sigma_{r+1}}{\sqrt{1-\delta^2}\sigma_r} \le \frac{1}{\sqrt{3}}$. Substitute this  inequality into  (\ref{eqn:err_ineq}) to obtain
\begin{equation}
e < \frac{2}{\sqrt{3}}\theta\sqrt{\epsilon}.
\end{equation}
Apply the same argument as in Lemma \ref{lemma:alt_ref} in the case where $\dist(A, U^{(r)}) \ge 4\theta\sqrt{\epsilon}$ 
and obtain
\begin{align*}
\dist(B^T, V^{(r)}) &\le c\cdot \dist(A,U^{(r)}) + e\\
&< \frac{1}{\sqrt{3}}\dist(A,U^{(r)}) + \frac{1}{4}\cdot\frac{2}{\sqrt{3}}\dist(A,U^{(r)})\\
&< 0.87\cdot\dist(A,U^{(r)}),
\end{align*}
Likewise, in the case where $\dist(A, U^{(r)}) < 4\theta\sqrt{\epsilon}$
 obtain
\begin{align*}
\dist(B^T, V^{(r)}) &\le c\cdot \dist(A,U^{(r)}) + e\\
&< \frac{4}{\sqrt{3}}\cdot\theta\sqrt{\epsilon}
+ \frac{2}{\sqrt{3}}\cdot\theta\sqrt{\epsilon}=2\theta\sqrt{3\epsilon}\\
&< 4\theta\sqrt{\epsilon},
\end{align*}
\end{proof}

\begin{remark} \label{regap}
Apply
Corollary \ref{coro:iter_ref}  
 to the pairs $A:=A_t$, $B:=B_{t+1}$ 
 and $A:=B_{t+1}$, $B:=A_{t+1}$
 for $t=0,1,\dots,\tau-1$  to  yield
linear decrease whp of all distances  appearing in the algorithm up to or beyond  the value $4\theta\sqrt {\epsilon}$.
This value   is quite large 
if the tail of the spectrum of the singular values of $M$ is flat, e.g.,  $4\theta\sqrt {\epsilon}\ge 1296r\beta^{-1}\xi^{-1}\frac{\sqrt{q-r}}{\sqrt q}$ for $q:=\min\{m,n\}$  if
$\sigma_{r+1}=\sigma_q$: indeed, in this case 
$\theta=\sqrt {q-r}$,
 while we cannot
decrease $\epsilon^2$ below
$1296\beta^{-1}r^
2\xi^{-4}/q$ because of bounds (\ref{eql}) and  $l\le q$. However, $\theta\approx 1$, while $4\sqrt
{\epsilon}$ has order of $r/\sqrt q$ if $\sigma_{r+2}$ is a small fraction of $\sigma_{r+1}$.
\end{remark}

\subsection{Main Theorem: an output error bound}
\begin{theorem}\label{thm:altermain}
Suppose that 
 $m, n, r$, $M$, $U^{(r)}$, $V^{(r)}$ are defined as in Thm. \ref{thm:contracting_distance},  
$$\frac{\sigma_{r+1}(M)}{\sigma_{r}(M)} \le \frac{1}{2},~  
 A_0\in\mathbb R^{m\times r},~{\rm and}~ 
 \dist(A_0, U^{(r)}) \le \frac{1}{2}.$$
Fix two (sufficiently small) positive numbers $\xi$
  and $\epsilon$ such that
$$\xi  < 1/3~{\rm and}~\epsilon \le (8\theta)^{-2}, $$
for $\theta:= \frac{\Bar\sigma_{r+1}}{\sigma_{r+1}}$ 
 of (\ref{eqtht}), and  let   
Alg. \ref{algalter}
be applied
for  $\tau= \ceil{\frac{1}{2}\log_{0.87}(8\theta\cdot\epsilon)} $.
Then its output  matrix
 $A_{\tau}$ satisfies the bound
 \begin{equation}
    \dist (A_{\tau}, U^{(r)}) \le  4\theta\cdot\sqrt{\epsilon}
 \end{equation}
 with a probability no less than $1 - 2\tau\cdot\xi$.
\end{theorem}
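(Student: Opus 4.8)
The strategy is to iterate Lemma~\ref{lemma:alt_ref} (and its row-analogue for the $B\to A$ half-step) and track the principal angle distance as a one-dimensional dynamical system. First I would set up the bookkeeping: let $\delta_0 := \dist(A_0,U^{(r)}) \le 1/2$, and define $\delta_t$ to be $\dist(A_t,U^{(r)})$ after the $t$-th full iteration of Algorithm~\ref{algalter}, with the intermediate quantity $\dist(B_t,V^{(r)})$ controlled by one application of Theorem~\ref{thm:contracting_distance}. Each application holds with probability at least $1-\xi$, and one full iteration uses two applications (one to pass $A_t\to B_t$, one to pass $B_t\to A_{t+1}$), so over $\tau$ iterations the union bound gives the stated failure probability $2\tau\cdot\xi$. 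The whole analysis is then conditioned on the event that all $2\tau$ invocations succeed.

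Next I would establish the contraction. With $\sigma_{r+1}/\sigma_r \le 1/2$, $\Bar\sigma_{r+1}/\sigma_{r+1} = \theta$, and $\epsilon \le (8\theta)^{-1}$, I want to show that as long as $\delta_t \le 1/2$ and $\delta_t \ge 4\theta\epsilon$, the factor
$$c_t = \frac{\sigma_{r+1}}{\sigma_r}\cdot\frac{1}{\sqrt{1-\delta_t^2}}\Big(1 + 2\epsilon\cdot\frac{\Bar\sigma_{r+1}}{\delta_t\sigma_{r+1}}\Big)$$
from Lemma~\ref{lemma:alt_ref} satisfies $c_t \le 0.87 < 1$. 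Indeed, $\delta_t \le 1/2$ gives $1/\sqrt{1-\delta_t^2} \le 2/\sqrt{3} \approx 1.1547$; the lower bound $\delta_t \ge 4\theta\epsilon$ together with $\theta\ge 1$ gives $2\epsilon\theta/\delta_t \le 1/2$, so the parenthetical factor is at most $3/2$; hence $c_t \le \frac12 \cdot \frac{2}{\sqrt3}\cdot\frac32 = \frac{3}{2\sqrt3} = \frac{\sqrt3}{2}\approx 0.866 \le 0.87$. One must also check the bound stays valid after the $B$-step: the symmetric (transposed) version of Theorem~\ref{thm:contracting_distance} applied with $M^T$ in place of $M$ gives $\dist(A_{t+1},U^{(r)}) \le c_t' \dist(B_t,V^{(r)})$ with the analogous $c_t' \le 0.87$, using the same singular-value gap (the singular values of $M$ and $M^T$ coincide, and $\sigma_{F,r+1}$ is symmetric). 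I should also confirm $\delta_{t+1} \le 1/2$ is preserved: since $c_t < 1$ and $\delta_t \le 1/2$, certainly $\delta_{t+1} \le \delta_t \le 1/2$, so the induction closes.

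Finally I would run the recursion to termination. While $\delta_t > 4\theta\epsilon$, each full iteration multiplies the distance by at most $(0.87)^2$, so $\delta_t \le (0.87)^{2t}\,\delta_0 \le \tfrac12 (0.87)^{2t}$. We reach $\delta_\tau \le 4\theta\epsilon$ as soon as $\tfrac12(0.87)^{2\tau} \le 4\theta\epsilon$, i.e. $(0.87)^{2\tau} \le 8\theta\epsilon$, i.e. $2\tau\log_{0.87}(0.87) \ge \log_{0.87}(8\theta\epsilon)$ — wait, the inequality flips because $\log_{0.87}$ is decreasing — which is exactly $\tau \ge \tfrac12\log_{0.87}(8\theta\epsilon)$, matching the choice $\tau = \ceil{\tfrac12\log_{0.87}(8\theta\epsilon)}$ in the statement (note $8\theta\epsilon \le 1$ so the logarithm is nonnegative). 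Once $\delta_t \le 4\theta\epsilon$ at some step, I should argue the distance does not escape above $4\theta\epsilon$ afterward: re-examining Theorem~\ref{thm:contracting_distance} directly (rather than through Lemma~\ref{lemma:alt_ref}, whose constant $c$ degrades as $\delta\to 0$), the bound $\dist(B,V^{(r)}) \le \frac{\delta}{\sqrt{1-\delta^2}}\cdot\frac{\sigma_{r+1}}{\sigma_r} + \frac{2\epsilon}{\sqrt{1-\delta^2}}\theta\cdot\frac{\sigma_{r+1}}{\sigma_r}\cdot\frac{\sigma_r}{\sigma_{r+1}}$... more carefully, the second term is $\frac{2\epsilon}{\sqrt{1-\delta^2}}\cdot\frac{\Bar\sigma_{r+1}}{\sigma_r} = \frac{2\epsilon\theta}{\sqrt{1-\delta^2}}\cdot\frac{\sigma_{r+1}}{\sigma_r} \le \frac{2\epsilon\theta}{\sqrt{1-\delta^2}}\cdot\frac12$, which with $\epsilon\le(8\theta)^{-1}$ and $\delta\le 1/2$ is at most $\tfrac{1}{8}\cdot\tfrac{2}{\sqrt3} < 2\theta\epsilon$; combined with the first term $\le \tfrac12\cdot\tfrac{2}{\sqrt3}\delta < \delta$, one checks $4\theta\epsilon$ is an absorbing region. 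The output $A = A_{\tau+1}$ (per the algorithm's indexing) therefore satisfies $\dist(A,U^{(r)}) \le 4\theta\epsilon$ on the success event, completing the proof. The main obstacle is the last point — handling the fixed-point region cleanly, since Lemma~\ref{lemma:alt_ref}'s contraction constant blows up as $\delta\to 4\theta\epsilon$, so the final steps must be controlled by returning to the additive bound (\ref{eqn:con_dist2}) rather than the multiplicative form.
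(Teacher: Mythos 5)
Your proposal is correct and follows essentially the same route as the paper: iterate Theorem \ref{thm:contracting_distance} (with its transposed version for the $B_t\to A_{t+1}$ half-step), establish the $0.87$-contraction while $\dist(A_t,U^{(r)})\ge 4\theta\epsilon$, show that $4\theta\epsilon$ is an absorbing region otherwise, take a union bound over the $2\tau$ invocations, and pick $\tau$ so that $\tfrac12(0.87)^{2\tau}\le 4\theta\epsilon$. One nit: in the absorbing case you must keep the $1/\sqrt{3}$ factor on the first term of (\ref{eqn:con_dist2}) (weakening it to ``$<\delta$'' only yields $6\theta\epsilon$); retaining it gives $(\delta+2\theta\epsilon)/\sqrt{3}\le 6\theta\epsilon/\sqrt{3}<4\theta\epsilon$, which is exactly the paper's computation, and the auxiliary appeal to $\theta\ge 1$ in your contraction step is unnecessary since $\delta\ge 4\theta\epsilon$ alone gives $2\epsilon\theta/\delta\le 1/2$.
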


\begin{proof} 
First we prove by induction in $t$ that for  $t=1,2,\dots,\tau$ the matrices $A_t$
and  $B_t$ computed by
 Alg. \ref{algalter} satisfy whp the following bounds:  \begin{equation}
\dist(A_t, U^{(r)}) < 1/2
\textrm{ and } \dist(B_{t+1}^T, V^{(r)}) < 1/2\label{eqn:mainthmeq1}. 
\end{equation}
We immediately verify that 
$\dist(A_0, U^{(r)}) < 1/2$. 
Now let $\dist(A_t, U^{(r)}) < 1/2$
for a fixed $t$. Then  apply Corollary \ref{coro:iter_ref}
for $A=A_t$ and obtain that 
$$\dist(B_{t+1}^T, V^{(r)})\le \max\{0.75\dist(A_t, U^{(r)}),4\theta\sqrt{\epsilon}\}$$ with a probability no less than $1-\xi$. This implies the second inequality of (\ref{eqn:mainthmeq1})
because $\epsilon < (8\theta)^{-2}$  by assumption and hence $4\theta\sqrt{\epsilon} < 1/2$.


Apply the union bound for $\tau = \ceil{\frac{1}{2}\log_{0.87}(\frac{4\theta\cdot\epsilon}{1/2})} = \ceil{\frac{1}{2}\log_{0.87}(8\theta\cdot\epsilon)}$ and
obtain that, with  
 a probability no less than $1 - 2\tau\cdot\xi$, 
all principal angle distances of (\ref{eqn:mainthmeq1})
 for all $t$  
 either decrease by at least a factor of 1/0.87 versus the previous ones
or are less  than $4\theta\cdot\sqrt{\epsilon}$.
\end{proof}


\section{Numerical Tests}\label{snmrtsts}

\subsection{Overview}\label{sovrv}  

We implemented our algorithms  in Python with Numpy and Scipy packages. For solving generalized LLSP we call {\bf lstsq} and for computing Rank Revealing QR factorization we call {\bf qr}, relying on {\bf Lapack} function {\bf gelsd} and {\bf dgeqp3}, respectively.

We  performed all tests
on a machine running Mac OS 10.15.7 with 2.6 GHz Intel Core i7 CPU and 16GB of Memory.

\subsection{Input matrices}\label{ststmt} 

We used the following classes of input matrices
(GitHub repo https://github.com/soohgo/superfast-norm-est):

\begin{itemize}
\item 
A $3000\times 3000$  {\it fast-decay} matrix  
$U\Sigma V^T$ where  $U$ and $V$  are the matrices of the left and right singular vectors of the SVD of a
$3000\times 3000$  Gaussian random 
matrix,\footnote{filled with independent   standard  Gaussian (normal) random variables} respectively, and  where
$\Sigma = {\rm Diag}(\sigma_i)_{i=1}^{3000}$ for $\sigma_i = 1$ for $i \le r$ and $\sigma_i = 2^{-(i-r)}$ for $i > r$. 

\item  
A $3000\times 3000$ {\it slow-decay}  matrix;
it denotes the same product $U\Sigma V^T$ except that 
 now we let
  $\sigma_i = 1$ for $i \le r$ and $\sigma_i = (1+ i-r)^{-2}$ for $i > r$.

\item  
A $3000\times 3000$  {\it single-layer potential}  matrix;   
it discretizes a single-layer potential operator of  \cite[Sec. 7.1]{HMT11}
$$ 
[Sf](x) = \int_{\Gamma_1} \log |x- y| \cdot f ~\textrm{d}\sigma(y), ~x\in \Gamma_2,
$$
for a constant  $f$, for two curves  $\Gamma_1(t) =\big(r(t)\cos (t), r(t)\sin (t) \big)$ and $\Gamma_2 = (3\cos (t), 3\sin (t))$ in $\mathbb{R}^2$, for $t \in [0, 2\pi]$ and $r(t) = 2.5 + \cos (3t)$.

\item   
A $2000\times 2000$ {\it Cauchy} matrix $(\frac{1}{X_i-Y_j})_{i,j=1}^{2000}$.  Here $X_i$ and $Y_j$ denote independent random variables $X_i$ and $Y_j$ uniformly distributed on the intervals $(0, 100)$ and $(100, 200)$, respectively. 
This matrix has  fast decaying singular values (cf. \cite{BT17}). 

\item  
A $1000\times 1000$ {\it shaw} matrix. It 
discretizes a one-dimensional image restoration model.\footnote{See 
 http://www.math.sjsu.edu/singular/matrices and 
  http://www2.imm.dtu.dk/$\sim$pch/Regutools 
  
For more details see Chapter 4 of the Regularization Tools Manual at \\
  http://www.imm.dtu.dk/$\sim$pcha/Regutools/RTv4manual.pdf} 
  
 We set target rank $r$ to 11
 in the case of the single layer potential matrix and set the rank to 10 for all other input matrices.
\end{itemize}   

\subsection{General description of numerical experiments}\label{ststsitreflsc}

We computed an initial crude LRA (rank-$r$ approximation) of an input matrix $M$ in  two ways: 
(i) First apply the  Range Finder (\cite[Alg. 4.1]{HMT11}) with no oversampling
to compute an orthogonal matrix $Q\in \mathbb{R}^{m\times r}$,
given an input matrix  of the size $m\times n$ and a target rank  $r$; 
then output rank-$r$ LRA $QQ^TM$ (this initialization is not superfast). 

(ii) First apply a  single  ACA loop  
to compute CUR LRA  $CG^{-1}R$, where the generator matrix $G$ is an $r\times r$ submatrix of $M$  and 
$C$ and $R$ are corresponding column and row submatrices of $M$, respectively. 
Namely,  
 first compute 
 Strong RRQR 
 (Rank Revealing QR) factorization of randomly selected column submatrix $C_0$; the resulting permutation matrix determines a row index set and a  submatrix $R$. Then compute Strong 
 RRQR factorization of the matrix $R$; the resulting permutation matrix determines a column index set and hence  matrix $C$ and generator $G$.
 The computation of
Strong RRQR factorization of skinny matrices is superfast  for $r^2\ll \min\{m,n\}$.
Continue 
 by performing
  a small number of ACA steps, 
 leaving some room for refinement and keeping the computation superfast,  so that our refinement cost is comparable with the cost of ACA initialization, with which we have still consistently    improved an initial LRA significantly 
 in two or three refinement steps.

We applied Alg. \ref{algalter} separately to  the two initial approximations (i) and (ii) above. For each alternating refinement step we sampled $d$ columns or $d$ rows, for  $d=15r$, which turned out to be sufficiently large  in our tests, even though 
 for the formal support of our refinement results we need  much larger samples of order $r^2\epsilon^{-2}$.  

We computed Frobenius relative error norm $\frac{|| M - A_tB_t||_F}{||M - M_r||_F}$ for the initial LRA $A_0B_0$ and
 the  LRAs $A_tB_t$ computed at the $t$th refinement steps for $t=1,2,3,4,5$.
 
We repeated the experiments 50 times and 
displayed the average error ratios in Table \ref{tableitref}.
  
\begin{table}[ht]

\begin{center}
\begin{tabular}{c||c|c c c c c }
 & \multicolumn{6}{c}{ HMT11 + Refinement }\\
\hline
Input & Init. & 1st step & 2nd step & 3rd step & 4th step & 5th step\\
\hline
shaw & 9.2486 & 1.3920 & 1.1726 & 1.0892 & 1.0727 & 1.0772 \\ 
SLP   & 3.5421 & 1.4720 & 1.1462 & 1.0971 & 1.0912 & 1.0825 \\ 
Cauchy & 5.7180 & 1.4783 & 1.1383 & 1.0764 & 1.0826 & 1.0747 \\ 
slow decay & 3.1190 & 1.7194 & 1.0826 & 1.0726 & 1.0715 & 1.0680 \\ 
fast decay  & 2.0612 & 1.6596 & 1.2429 & 1.1054 & 1.0756 & 1.0735 \\ 
\hline
\hline
 & \multicolumn{6}{c}{ Cross-Approximation + Refinement } \\
\hline
Input & Init. & 1st step & 2nd step & 3rd step & 4th step & 5th step\\
\hline
 shaw & 8.5939 & 1.0782 & 1.0723 & 1.0754 & 1.0674 & 1.0752 \\ 
SLP    & 6.2216 & 1.4372 & 1.1017 & 1.0796 & 1.0764 & 1.0782 \\ 
Cauchy & 1703.7035 & 2.4336 & 1.1003 & 1.0829 & 1.0797 & 1.0773 \\ 
slow decay & 2.5236 & 1.1128 & 1.0691 & 1.0726 & 1.0710 & 1.0683 \\ 
fast decay & 2.0717 & 1.3539 & 1.1595 & 1.0898 & 1.0743 & 1.0752 
\end{tabular}
\end{center}
\caption{Reducing Error Ratio in Refinements Alg. \ref{algalter}}\label{tableitref}
\end{table}

For both choices of initial approximation, the tests show  that
already 
in a few steps Alg. \ref{algalter} have significantly decreased the initial relative error norm,  which has
been more or less stabilized in the 
 subsequent steps. 
 
\subsection{Three options for solving generalized LLSP}\label{srfLRA w3LLSP} 

In these tests  we started with uniformly and randomly selected $r$ columns of $M$  
and then performed iterative refinement of LRA by using three different LLSP algorithms: (a) superfast  sampling directed by leverage scores (Alg. \ref{algalter}), (b) subspace embedding with a Gaussian multiplier, and (c) the standard Linear Least Squares Solver.

(b) The subspace embedding with Gaussian multiplier is a well-known 
 approach, where in each iteration 
one computes $B_t = \argmin_X ||GA_tX - GM ||_F$ and  $A_{t+1} = \argmin_X ||XB_tH - MH||_F$
and where $G$ and $H$ are $d\times m$ and $n\times d$ Gaussian matrices, respectively, generated independently of one another. This computation is not superfast.
 
(c) The standard deterministic Linear Least Squares Solver computes 
$$ \textstyle B_t = \argmin_X ||A_tX - M||_F~{\rm and}~A_{t+1} = \argmin_X ||XB_t - M||_F;$$
this  algorithm is more costly but outputs an optimal solution. 

\subsection{Test results}
 
In each iteration we computed the average principal angle distance (between the 
matrices of the 
$r$-top   left singular vectors of the input matrix and its approximation) and the average run time from 10 repeated tests;  then we plotted  our tests results in Fig. \ref{fig:iter_ref_lev_scr}. 
The tests showed that, similarly to the relative error norm in the tests of the previous subsections, the principal angle distance also
decreased significantly 
in a  few iterations  and 
then stabilized. 

The principal angle distance would have converged to 0 if the computations were exact and if we solved all auxiliary generalized LLSPs in (\ref{eqllsps})  by applying the standard deterministic algorithm (case (c)), but  the distance was
still expected to converge to a small positive 
value if the approximation algorithms using Gaussian embedding (case (b)) or sampling directed  by leverage scores (case (a)) were applied for generalized  LLSPs.
 This small positive value depends on the embedding dimension and the number of samples.

The tests have confirmed such expected behavior even in the presence of rounding errors: the refinement based on the standard LLSP solver decreased  the principal angle distance considerably stronger than with the two other approaches, (a) and (b), which has output   CUR LRAs (unlike the case (c) of using the Standard Algorithm), and they were still  close to optimal.


According to our tests for running time,
in cases (a) and (b)  the algorithms  have  run significantly faster than  in case (c). 
 Moreover, the algorithm with leverage scores (case (a)) used considerably less time at each iteration 
than  the Gaussian Embedding approach (b), and this benefit  strengthened  
 as the size of the input matrix grew. 


\begin{figure}[ht]
  \centering
  \includegraphics[width = \textwidth]{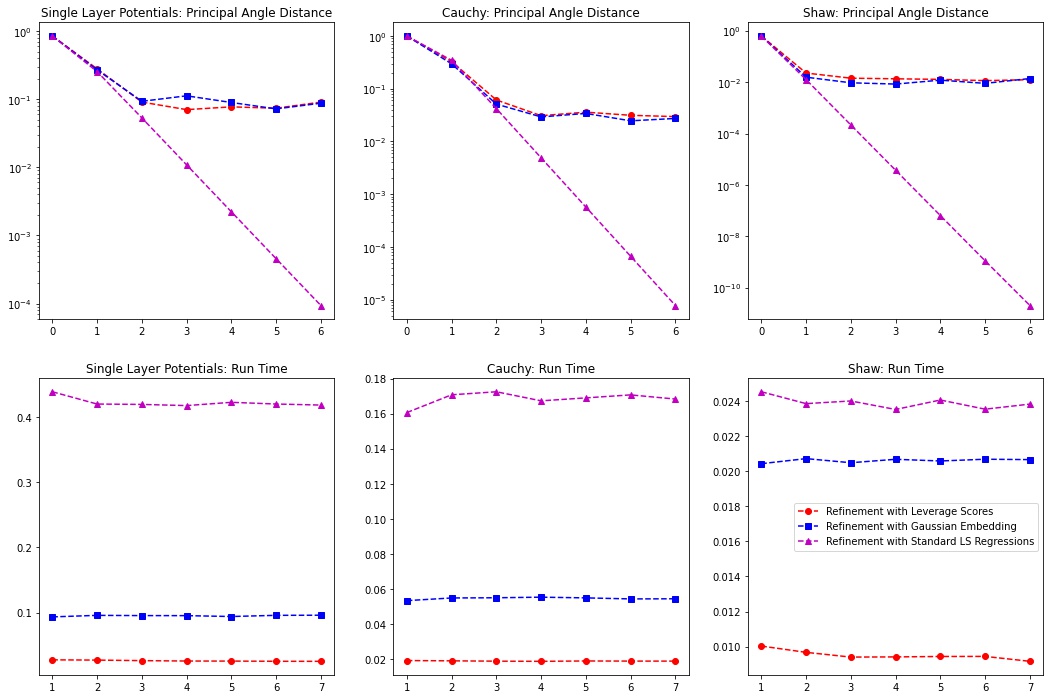}
\caption{
  Principal Angle Distance with Least Squares Regression in Refinement  Alg. \ref{algalter}}
   \label{fig:iter_ref_lev_scr}
\end{figure}


\section{Conclusions}\label{sconc}


We devised and analyzed two ALS
 algorithms for 
iterative refinement of an LRA. One of them is deterministic, another is randomized and superfast.
We proved that
for a large specified  class of input matrices the deterministic algorithm  outputs meaningful or even near-optimal LRA and that the randomized  superfast algorithm  outputs near-optimal CUR LRA whp for a little narrower matrix class. The latter algorithm recursively invokes the algorithm of \cite{DMM08}, which formally requires quite large samples of rows and columns of an input matrix, and this requirement is translated to our case; in the tests on real world matrices, however, both randomized  algorithms of \cite{DMM08}
 and ours  applied with much smaller samples have consistently remained efficient.

In our tests of the randomized algorithm on real world matrices the  error norm of initial LRAs 
tended to decrease significantly  already in two or three iterations and then stayed more or less stable.
 Formal support for cited
empirical behavior of our algorithm is a challenge.

Our initial  assumptions used for proving convergence  are quite restrictive even for our deterministic algorithm. Can we relax these
restrictions?
Presently we are
working  towards this goal by means of incorporation of some known techniques for LRA.

 We hope that already our current paper  motivates further study of superfast LRA and reveals
some power of LRA hidden in  \cite{DMM08}. 

      
\bigskip

{\bf \Large Appendix} 
\appendix


                                                     

\section{Proof of Thm. \ref{thm:dmm08thm5}}\label{appendix:pfdmm08thm5}
Recall that the matrix $A\in\mathbb{R}^{m\times r}$ has rank $r$
and let $A = U\Sigma V^T$ be SVD.

Fix $U_\perp\in\mathbb{R}^{m\times(m-r)}$ such that matrix $\begin{bmatrix}
    U & U_{\perp}
\end{bmatrix}$ is orthogonal.

Let Algorithm \ref{algsmplex}  
 compute a sampling  and scaling matrix $S$ by  using row leverage scores of $A$ and $l$ samples.
Notice that $\mathcal{S}$ is also associated with the sampling distribution of row leverage scores of the matrix $U$.

\begin{lemma}\label{lea1}
Fix positive  $\epsilon, \beta$,  and $\xi,$ and a positive integer $l$ such that $\epsilon,    \beta, \xi < 1$ and  $l \ge \frac{4r^2}{\epsilon^2\beta\xi^4}$. Then with a probability no less than $1-\xi$ it holds that
{\rm (1)} $\mathcal{S}A$ has full rank $r$ and
{\rm (2)} $||(\mathcal{S}U)^+ - (\mathcal{S}U)^T||_2 \le \epsilon\xi/\sqrt{2}$.
\end{lemma}
\begin{proof}
Proceed as in the proof of \cite[Lemma 1]{DMM08} but apply Markov's inequality such that with a probability no less than $1-\xi$ it holds that
\begin{align}
    ||U^TU - U^T\mathcal{S}^T\mathcal{S}U||_F &\le \frac{1}{\xi} \mathop{{}\mathbb{E}}\bigg[||U^TU - U^T\mathcal{S}^T\mathcal{S}U||_F\bigg] \nonumber\\ 
    &\le \frac{1}{\xi\sqrt{\beta l}} ||U||^2_F \nonumber\\
    &= \frac{r}{\xi\sqrt{\beta l}} \nonumber\\ 
    &\le \frac{\epsilon\xi}{2}  \label{eqn:lessepsxi}\\
    &< \frac{\epsilon}{2}~. \nonumber
\end{align}
Obtain from this inequality that $|1 - \sigma_j^2(\mathcal{S}U)|\le ||U^TU - U^T\mathcal{S}^T\mathcal{S}U||_2 \le ||U^TU - U^T\mathcal{S}^T\mathcal{S}U||_F < \epsilon/2$.  Since $\epsilon<1$  by assumption, it follows that $\sigma_j(\mathcal{S}U) > 1/\sqrt{2}$ for all $j\le r$ and hence $\rank(\mathcal{S}U) = r$.

 Now apply  \cite[Lemma 1]{DMM08} and deduces that
\begin{align*}    
||(\mathcal{S}U)^+ - (\mathcal{S}U)^T||_2 &= ||\Sigma_{\mathcal{S}U}^{-1} - \Sigma_{\mathcal{S}U}||_2\\
&= \max_j \Big|\sigma_j(\mathcal{S}U) - \frac{1}{\sigma_j(\mathcal{S}U)}\Big|\\
&= \max_j \Big|\frac{1-\sigma_j^2(\mathcal{S}U)}{\sigma_j(\mathcal{S}U)}\Big|\\
&\le \frac{\epsilon\xi/2}{1/\sqrt{2}} = \epsilon\xi/\sqrt{2}.
\end{align*}
The latter inequality follows from  (\ref{eqn:lessepsxi}) and the bound $\sigma_j(\mathcal{S}U) \ge 1/\sqrt{2}$.
\end{proof}

\begin{lemma}
For $\epsilon, \beta, \xi,$ and $l$ as in Lemma 
\ref{lea1} it holds with a probability no less than $1-\xi$ that
$$
||U^T\mathcal{S}^T\mathcal{S}U_{\perp}U_{\perp}^TM||_F \le \frac{\epsilon\xi}{2\sqrt{r}}||U_{\perp}U_{\perp}^TM||_F.
$$
\end{lemma}
\begin{proof}
Since $U$ is orthogonal, obtain that
\begin{align*}
    ||U^T\mathcal{S}^T\mathcal{S}U_{\perp}U_{\perp}^TM||_F &= ||UU^T\mathcal{S}^T\mathcal{S}U_{\perp}U_{\perp}^TM||_F \\
    &= ||UU^TU_{\perp}U_{\perp}^TM - UU^T\mathcal{S}^T\mathcal{S}U_{\perp}U_{\perp}^TM||_F.    
\end{align*}
The latter equation holds  because $U^TU_{\perp} = \mathbf{0}$.
 
Now substitute $UU^T=A$, $U_{\perp}U^T_{\perp}M=B$,
$UU^T\mathcal{S}^T=C$, and $\mathcal{S}U_{\perp}U^T_{\perp}M=R$.

Recall that the probability distribution
$\tilde p_1,\dots, \tilde p_m$   used for the construction of the sampling and scaling matrix $S$ in Alg. \ref{algsmplex}
has been computed from row leverage scores of the matrix  $A=A_t$.
Hence this  distribution satisfies (\ref{eqlevsc_row}),
and we can apply [9, Thm. 6] and obtain
\begin{align*}
\mathop{{}\mathbb{E}}||UU^TU_{\perp}U_{\perp}^TM - UU^T\mathcal{S}^T\mathcal{S}U_{\perp}U_{\perp}^TM||_F &= \mathop{{}\mathbb{E}}||AB - CR||_F \\
&\le \frac{1}{\sqrt{\beta l}}||A||_F||B||_F\\
&=\frac{\sqrt{r}}{\sqrt{\beta l}}||U_{\perp}U_{\perp}^TM||_F.
\end{align*}

 Markov's inequality  implies that 
\begin{align*}
||UU^TU_{\perp}U_{\perp}^TM - UU^T\mathcal{S}^T\mathcal{S}U_{\perp}U_{\perp}^TM||_F &\le \frac{1}{\xi}\mathop{{}\mathbb{E}}||UU^TU_{\perp}U_{\perp}^TM - UU^T\mathcal{S}^T\mathcal{S}U_{\perp}U_{\perp}^TM||_F\\
&\le \frac{\sqrt{r}}{\xi\sqrt{\beta l}}||U_{\perp}U_{\perp}^TM||_F
\end{align*}
with a probability no less than $1 - \xi$.
 
 To complete the proof recall that $\xi<1$ and replace $l$ with its upper bound. 
\end{proof}

\noindent Now we are ready to {\bf prove Thm. \ref{thm:dmm08thm5}}
by following the  argument  in \cite[Proof of Eqn. (21)]{DMM08}.

With a probability at least $1-2\xi$,
the assumptions of  the previous  two lemmas hold, and so
\begin{align*}
    M - A\tilde{X} &= M - A(\mathcal{S}A)^+\mathcal{S}M \\
    &= M - U(\mathcal{S}U)^+\mathcal{S}M \hspace{1cm} \textrm{(since $\mathcal{S}A$ has full rank)}\\
    &= M - U(\mathcal{S}U)^+\mathcal{S}(UU^T + U_{\perp}U_{\perp}^T)M \\
    &= M - U(\mathcal{S}U)^+\mathcal{S}UU^TM - U(\mathcal{S}U)^+\mathcal{S}U_{\perp}U_{\perp}^TM \\
    &= M - UU^TM - U(\mathcal{S}U)^+\mathcal{S}U_{\perp}U_{\perp}^TM \\
    &= U_{\perp}U_{\perp}^TM - U(\mathcal{S}U)^+\mathcal{S}U_{\perp}U_{\perp}^TM \\
    &= U_{\perp}U_{\perp}^TM - U(\mathcal{S}U)^T\mathcal{S}U_{\perp}U_{\perp}^TM -U((\mathcal{S}U)^T-(\mathcal{S}U)^+)\mathcal{S}U_{\perp}U_{\perp}^TM. 
\end{align*}
Take the Frobenius norm of both sides,  substitute
$\Omega = (\mathcal{S}U)^+ - (\mathcal{S}U)^T$, and obtain
\begin{align*}
    ||M - A\tilde{X}||_F &\le ||U_{\perp}U_{\perp}^TM||_F + ||U(\mathcal{S}U)^T\mathcal{S}U_{\perp}U_{\perp}^TM||_F + ||U\Omega\mathcal{S}U_{\perp}U_{\perp}^TM||_F\\
    &\le ||U_{\perp}U_{\perp}^TM||_F + ||U^T\mathcal{S}^T\mathcal{S}U_{\perp}U_{\perp}^TM||_F + ||\Omega||_2\cdot||\mathcal{S}U_{\perp}U_{\perp}^TM||_F \label{pf41inequal}
\end{align*}

\cite[Lemma 3]{DMM08} shows that
$\mathop{{}\mathbb{E}}
||\mathcal{S}U_{\perp}U_{\perp}^TM||_F^2 = ||U_{\perp}U_{\perp}^TM||_F^2$.

Combine Jensen's and Markov's inequalities 
and obtain that  
$$
||\mathcal{S}U_{\perp}U_{\perp}^TM||_F \le \frac{1}{\xi} ||U_{\perp}U_{\perp}^TM||_F
$$
with a probability at least $1-\xi$.
 
Now  combine the  latter pair of inequalities and obtain that  

\begin{align*}
    ||M - A\tilde{X}||_F &\le ||U_{\perp}U_{\perp}^TM||_F + ||U^T\mathcal{S}^T\mathcal{S}U_{\perp}U_{\perp}^TM||_F + ||\Omega||_2\cdot||\mathcal{S}U_{\perp}U_{\perp}^TM||_F \\
    &\le ||U_{\perp}U_{\perp}^TM||_F + \frac{\epsilon\xi}{2\sqrt{r}}||U_{\perp}U_{\perp}^TM||_F + \frac{\epsilon\xi}{\sqrt{2}} \cdot \frac{1}{\xi} ||U_{\perp}U_{\perp}^TM||_F\\
    &\le (1 + \frac{\epsilon\xi}{2\sqrt{r}} + \frac{\epsilon}{\sqrt{2}})||U_{\perp}U_{\perp}^TM||_F.
\end{align*}
with a probability no less than $1-3\xi$. 

We proved this bound for any pair of positive 
$\epsilon$ and 
$\xi$ exceeded by 1, and so it still holds if we
 replace $\xi$ with $\xi/3$ and $\epsilon$ with $\epsilon/2$. Now, since $\frac{\epsilon/2\cdot\xi/3}{2\sqrt{r}} + \frac{\epsilon/2}{\sqrt{2}} \le \frac{\epsilon}{16} + \frac{\epsilon}{2\sqrt{2}} \le \epsilon$, conclude that for $\epsilon, \beta, \xi$ and $l$ such that $\epsilon < \frac{1}{2}$, $\beta < 1$, $\xi < \frac{1}{3}$, and $l \ge \frac{4r^2}{\epsilon^2\beta\xi^4} \cdot 2^2 \cdot 3^4 = 1296r^2\beta^{-1}\epsilon^{-2}\xi^{-4}$,
 the bound
$$
||M - A\tilde{X}||_F \le (1+\epsilon)||U_{\perp}U_{\perp}^TM||_F = (1+\epsilon)||M - AA^+M||_F
$$
holds with a probability no less than $1-\xi$.                                                                                                                                                                                                                                                                                                                                                                                                                                                          
 This completes the proof of Thm. \ref{thm:dmm08thm5}.


                                                                                                                                                                                                                                                                                                                                                                                                                                                           
\medskip


\noindent {\bf Acknowledgements:}
Our work has been supported by NSF Grants 
 CCF--1563942 and CCF--1733834
and PSC CUNY Award  66720-00 54.
Years ago, E. E. Tyrtyshnikov,
citing his discussion with M. M. Mahoney, 
challenged the second author
 to combine ACA
 iterations with randomized LRA algorithms. Taking this challenge eventually evolved into our current ALS extension of the seminal work \cite{DMM08}.
 Thoughtful  comments
of reviewers
helped us greatly improve our original draft.




\end{document}